\documentclass[11pt,reqno]{amsart}
\usepackage[T1]{fontenc}
\usepackage[utf8]{inputenc}
\usepackage{lmodern}
\usepackage[letterpaper,hmargin=1.12in,vmargin=1in]{geometry}
\usepackage{amsmath,amssymb,amsthm,mathtools,stmaryrd}
\usepackage{float}
\usepackage{needspace}
\usepackage{tikz}
\usetikzlibrary{arrows.meta,calc}
\definecolor{ggblue}{RGB}{28,92,150}
\definecolor{gglight}{RGB}{224,239,248}
\definecolor{ggpurple}{RGB}{119,54,147}
\definecolor{ggred}{RGB}{181,39,51}
\definecolor{gggray}{RGB}{100,110,120}
\tikzset{ggaxis/.style={-{Stealth[length=1.8mm]},draw=black!80,line width=.45pt},ggline/.style={draw=ggblue,line width=1.15pt},ggarrow/.style={-{Stealth[length=2.1mm]},line width=.9pt},gglabel/.style={font=\small},ggsmall/.style={font=\footnotesize}}

\usepackage[expansion=false]{microtype}
\usepackage[colorlinks=true,linkcolor=blue,citecolor=blue,urlcolor=blue]{hyperref}
\hypersetup{
  pdftitle={A growth gap for anisotropic minimal graphs},
  pdfauthor={Yang Yang},
  pdfsubject={Gradient growth gap and Bernstein rigidity for anisotropic minimal graphs},
  pdfkeywords={Anisotropic minimal graph, growth gap, Bernstein theorem, gradient growth, Jacobi field}
}
\newtheorem{theorem}{Theorem}[section]
\newtheorem{lemma}[theorem]{Lemma}
\newtheorem{proposition}[theorem]{Proposition}
\theoremstyle{remark}
\newtheorem{remark}[theorem]{Remark}
\newcommand{\R}{\mathbb R}
\newcommand{\Z}{\mathbb Z}
\newcommand{\Sn}{\mathbb S}
\newcommand{\Hn}{\mathcal H}
\newcommand{\M}{\mathbf M}
\newcommand{\restr}{\mathbin{\llcorner}}
\newcommand{\current}[1]{\llbracket #1\rrbracket}
\DeclareMathOperator{\spt}{spt}
\DeclareMathOperator{\conv}{conv}
\DeclareMathOperator{\reg}{reg}
\DeclareMathOperator{\sing}{sing}
\DeclareMathOperator{\tr}{tr}
\DeclareMathOperator{\Tr}{Tr}
\DeclareMathOperator{\Div}{div}
\DeclareMathOperator*{\esssup}{ess\,sup}
\DeclareMathOperator*{\essinf}{ess\,inf}
\allowdisplaybreaks[1]
\title[A growth gap for anisotropic minimal graphs]{A growth gap for anisotropic minimal graphs}
\author{Yang Yang}
\address{School of Mathematics, Hunan University}
\email{yyang1@hnu.edu.cn}
\date{}
\subjclass[2020]{Primary 53A10; Secondary 35J93, 49Q05, 49Q20}
\keywords{Anisotropic minimal graph, Bernstein theorem, gradient growth, Jacobi field}

\begin{document}

\begin{abstract}
We prove a growth gap for the gradient of entire anisotropic minimal graphs.
For each $n\geq2$ and each smooth uniformly elliptic parametric integrand
$\Phi$ on $\R^{n+1}$, there is an exponent $\alpha=\alpha(n,\Phi)>0$
such that every smooth nonaffine entire $\Phi$-minimal graph
$u:\R^n\to\R$ satisfies $\sup_{B_R^n}|Du|\geq cR^\alpha$ for all
$R\geq R_0$, for some constants $c>0$ and $R_0<\infty$ depending on
the solution.
In particular, gradient growth $o(R^\alpha)$ forces flatness, resolving
a conjecture of Mooney and the author. The result holds in every dimension,
without any assumption that $\Phi$ is close to the Euclidean area integrand.
\end{abstract}

\maketitle

\section{Introduction}

The anisotropic Bernstein problem asks when an entire graph critical for an
elliptic parametric functional must be a hyperplane. For Euclidean area, the
answer depends only on the dimension: every entire minimal graph over $\R^n$
is affine for $n\leq 7$, whereas nonaffine examples exist for
$n\geq 8$ \cite{Almgren66,BDGG,DeGiorgi,Fleming,Simons}.
For a general anisotropic energy, Jenkins \cite{Jenkins} and Simon
\cite{Simon} proved rigidity in dimensions two and three. Mooney
\cite{Mooney} constructed nonflat examples in dimension six, and Mooney
and the author \cite{MYBernstein} subsequently constructed examples in
dimension four, settling the remaining dimensional question. Thus no
unrestricted Bernstein theorem can hold for all elliptic anisotropies once
$n\geq 4$.

The growth of the gradient gives a finer distinction between flat and nonflat
graphs. A bounded-gradient entire solution is affine for every fixed elliptic
integrand, by applying the De Giorgi--Nash--Moser oscillation estimate to its
derivatives. In the Euclidean case, Ecker and Huisken \cite{EH} proved
rigidity in every dimension under the much weaker assumption
$|Du(x)|=o(|x|)$. In their study of anisotropic foliations of Lawson cones,
Mooney and the author asked whether bounded-gradient rigidity always extends
to some positive power of the radius \cite[Conjecture 3.3]{MYLawson}.
The exponent was allowed to depend on the integrand. This qualification is
natural: the constructions in \cite{MYLawson,MYBernstein} vary the integrand
together with the growth rate, and do not provide arbitrarily slow growth
for a single fixed anisotropy.

We give an affirmative answer to this question. Let $n\geq2$, and let
$\Phi:\R^{n+1}\setminus\{0\}\to(0,\infty)$ be smooth and positively
one-homogeneous. We assume that there are constants
$0<\lambda\leq\Lambda<\infty$ such that
\begin{equation}\label{ellipticity}
\lambda|\tau|^2\leq D^2\Phi(\nu)[\tau,\tau]\leq\Lambda|\tau|^2,
\qquad \nu\in\Sn^n,\quad\tau\perp\nu.
\end{equation}
With $\Phi(0)=0$, the above assumptions in particular
make $\Phi$ convex on $\R^{n+1}$. For an oriented hypersurface $\Sigma$
with chosen unit normal $\nu_\Sigma$, write $\Hn^k$ for $k$-dimensional
Hausdorff measure ($k\geq0$). The associated energy is
\[
\mathcal A_\Phi(\Sigma)=\int_\Sigma\Phi(\nu_\Sigma)\,d\Hn^n.
\]
For $p\in\R^n$ set $\varphi(p)=\Phi(-p,1)$. The Euler--Lagrange
equation for a graph $\Sigma=\{(x,u(x)):x\in\R^n\}$ with the upward
orientation $\nu_\Sigma\cdot e_{n+1}>0$ is
\begin{equation}\label{graph-equation}
\Div D\varphi(Du)=0.
\end{equation}
We write $B_R^n(x)$ and $B_R(X)$ for open balls in the base $\R^n$
and the ambient space $\R^{n+1}$, respectively; an omitted center is zero.

\begin{theorem}\label{main}
Let $n\geq 2$, and let $\Phi:\R^{n+1}\setminus\{0\}\to(0,\infty)$ be
smooth, positively one-homogeneous, and uniformly elliptic in the sense of
\eqref{ellipticity}. There exists $\alpha=\alpha(n,\Phi)>0$ such that every
smooth entire solution $u:\R^n\to\R$ of \eqref{graph-equation} with
\begin{equation}\label{small-growth}
\sup_{B_R^n}|Du|=o(R^\alpha)\qquad\text{as }R\to\infty
\end{equation}
is affine. Moreover, for every nonaffine entire solution there are
constants $c>0$ and $R_0<\infty$, depending on the solution, such that
\begin{equation}\label{lower-growth}
\sup_{B_R^n}|Du|\geq cR^\alpha\qquad(R\geq R_0).
\end{equation}
\end{theorem}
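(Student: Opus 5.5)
The plan is a compactness and blow-down argument that produces a quantitative gap. The key object is the Gauss map of the graph. Each derivative $w=\partial_k u$ solves the linearized equation $\partial_i(a^{ij}(Du)\partial_j w)=0$ with $a^{ij}=\varphi_{ij}(Du)$. This equation is uniformly elliptic only when $|Du|$ is bounded, so one should work parametrically on $\Sigma$ instead. The Gauss map $\nu:\Sigma\to\Sn^n_+$ takes values in the open upper hemisphere. For a $\Phi$-minimal hypersurface, each coordinate $\nu\cdot e$ is a solution of the anisotropic Jacobi equation $L_\Phi f=0$ on $\Sigma$. This operator is uniformly elliptic in intrinsic terms with constants depending only on $\lambda,\Lambda$. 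Moreover $\Sigma$ is an area-minimizing-type ($\Phi$-minimizing, since it is a graph and calibrated by the extension of $\nu$) hypersurface, so it satisfies a uniform Sobolev/Michael--Simon inequality and volume bounds $\Hn^n(\Sigma\cap B_R)\le CR^n$. Hence De Giorgi--Nash--Moser theory applies on $\Sigma\cap B_R$ uniformly in $R$. I would use the Bombieri--Giusti style Harnack inequality on minimizing boundaries, adapted to $\Phi$, to give a scale-invariant oscillation decay with an exponent $\beta=\beta(n,\Phi)\in(0,1)$.

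The heart of the argument is a decay estimate for the positive supersolution $v=\nu\cdot e_{n+1}=\bigl(1+|Du|^2\bigr)^{-1/2}$, up to the weight coming from $\varphi$. This function is a positive solution of the Jacobi equation, while $\nu\cdot e_k$ for $k\le n$ are bounded solutions. The strategy is as follows.

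First, write $M(R)=\sup_{B_R^n}|Du|$. The graph over $B_R^n$ contains $\Sigma\cap B_{R}(X_0)$ up to a vertical translation. The vertical extent is at most $RM(R)$, so it is more convenient to measure scales intrinsically. Apply the Harnack inequality for positive solutions to $v$ on $\Sigma\cap B_r$. Fix an unbounded direction, which must exist if $u$ is nonaffine and $M$ is unbounded (bounded gradient already gives affinity, as noted in the introduction). Then iterate the oscillation decay for the vector-valued map $\nu$ from scale $r$ down to scale $1$. This gives
\[
\operatorname{osc}_{\Sigma\cap B_1}\nu\le C\,r^{-\beta}\operatorname{osc}_{\Sigma\cap B_r}\nu .
\]

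Second, relate the oscillation of $\nu$ on the large ball to the gradient bound. The oscillation of $\nu$ can only be large if $\nu$ approaches the equator. Near the equator the distance of $\nu$ to $\partial\Sn^n_+$ is comparable to $1/|Du|$. I would therefore apply the decay not to $\nu$ but to $\log v$, or to $v$ via the weak Harnack. This should give $\inf_{\Sigma\cap B_r} v\ge c\,r^{-\gamma}\inf_{\Sigma\cap B_1}v$ from Harnack chains, while nonflatness forces $\operatorname{osc}_{\Sigma\cap B_1}\nu=\delta>0$. Combining the two steps, and choosing $\alpha$ small relative to $\beta$, rescaling should show that the smallness hypothesis \eqref{small-growth} makes the oscillation of $\nu$ on $\Sigma\cap B_r$ decay slower than $r^{\beta}$ grows. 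That contradicts $\delta>0$ unless $\delta=0$, i.e.\ unless $u$ is affine. The same bookkeeping in contrapositive form gives \eqref{lower-growth} with $c$ depending on $\delta$ and $R_0$ on the scale where $\delta$ is attained.

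The main obstacle is the mismatch of scales. Extrinsic balls $B_r$ in $\R^{n+1}$ and base balls $B_R^n$ differ when the gradient is large. More seriously, the oscillation decay of $\nu$ gives only $\operatorname{osc}\le 1$ at large scale, which is useless unless $\nu$ stays in a compact subset of the open hemisphere. My first attempt would be to use the positive solution $v$ and the boundary Harnack principle for $L_\Phi$ on $\Sigma$. The idea is to compare each $\nu\cdot e_k$ with $v$, so that the ratios $\nu\cdot e_k/v=-\partial_k\varphi$-type quantities, essentially $Du$, satisfy a divergence-form equation with weight $v^2$. For that weighted equation, a Harnack inequality on $\Sigma$ gives Hölder decay of $Du/(\text{scale})$ with exponent $\alpha$. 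Then $|Du|=o(R^\alpha)$ forces the oscillation of $Du$ on fixed balls to vanish, so $Du$ is constant. Verifying the uniform weighted Sobolev/Poincaré inequality (with $v^2$ as an $A_2$-type weight on minimizing $\Sigma$) is the step I expect to be hardest.
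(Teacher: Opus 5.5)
\textbf{There is a genuine gap.} Your whole argument rests on a scale-invariant Harnack/De Giorgi--Nash--Moser theory on $\Sigma$ ``of Bombieri--Giusti type'', and that theory is not available here.

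Absolute minimality gives you a Sobolev inequality, via the isoperimetric inequality for currents. That yields only the $L^2$--$L^\infty$ mean-value bound for nonnegative subsolutions, Lemma~\ref{mean-value}. Oscillation decay and weak Harnack also need a Poincar\'e (relative isoperimetric) inequality on $\Sigma$ at every scale. Bombieri--Giusti prove this for area-minimizing boundaries by a compactness argument that uses structure missing for a general $\Phi$: there is no monotonicity formula, and blow-downs need not be cones. The paper deliberately uses no such inequality.

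Your individual steps also fail on their own terms.
\begin{itemize}
\item \emph{Step one proves too much.} The components $\nu\cdot e$ satisfy $L(\nu\cdot e)+Q(\nu\cdot e)=0$, with potential $Q=\tr(PA^2)\geq0$ that has no scale-invariant bound. Since $\operatorname{osc}\nu\leq 2$, your displayed decay with $r\to\infty$ would make every entire $\Phi$-minimal graph flat, contradicting \cite{BDGG,Mooney,MYBernstein}. So the estimate is not merely ``useless'' at large scales; it is false.
\item \emph{Step two points the wrong way.} A Harnack chain for the positive supersolution $\nu\cdot e_{n+1}$ gives $\inf_{\Sigma\cap B_r}\nu\cdot e_{n+1}\geq cr^{-\gamma}\inf_{\Sigma\cap B_1}\nu\cdot e_{n+1}$. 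That is an \emph{upper} bound on the growth of $|Du|$, whereas the theorem needs a lower bound for nonaffine solutions.
\item \emph{The last route lacks its key estimate.} You correctly note that $u_k$ solves $\Div_\Sigma\bigl((\nu\cdot e_{n+1})^2P\nabla u_k\bigr)=0$. But a uniform all-scale Harnack inequality for this degenerate weight is at least as strong as the Liouville part of the theorem, and you give no mechanism for it.
\end{itemize}

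\textbf{The missing idea} is to give up all-scale Harnack estimates and prove only a fixed-scale doubling estimate, Proposition~\ref{fixed-scale}: $V_O(r)\leq\max\{Kv(O),\tfrac12V_O(Lr)\}$ for the \emph{subsolution} $v=(\nu\cdot e_{n+1})^{-1}$. The proof is by contradiction and compactness.
\begin{itemize}
\item Failure would give rescalings $w_j=v/V$ with maximum one on $\overline B_1$, upper bounds tending to one on every fixed ball, and $w_j(0)\to0$.
\item The truncations $(w_j-k)_+$ are still subsolutions. Applying the mean-value estimate to them shows the limit equals one on a positive-measure subset of the \emph{regular} part, so the maximum is not lost into the singular set (Proposition~\ref{maximum-compactness}).
\item A local weak Harnack inequality on a smooth chart then gives $w\equiv1$ on a regular component, and $Lw\geq Qw$ forces $A=0$ there.
\item Wall rigidity (Lemmas~\ref{wall-rigidity} and~\ref{planar-component}) upgrades this planar component to a halfspace.
\item Only now is a quotient of Jacobi fields used: $f_j/h_j$ in \eqref{quotient-equation}, whose weight $h_j^2$ is \emph{nondegenerate} because $h_j=\nu_j\cdot N\geq\tfrac12$. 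Lemma~\ref{jacobi-comparison} then contradicts $w_j(0)\to0$.
\end{itemize}
Iterating the doubling estimate gives $\alpha=\log2/\log L$ for $n\geq3$, and the case $n=2$ follows from Jenkins' theorem.
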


The integrand need not be even. It is fixed throughout the theorem, and the
exponent is independent of the solution. Setting $\epsilon=\alpha/2$
also gives the formulation in \cite[Conjecture 3.3]{MYLawson}, which uses
$O(R^\epsilon)$ instead of $o(R^\alpha)$.
For $n\geq 3$, the stronger assertion \eqref{lower-growth} follows from a
fixed-scale estimate, Proposition~\ref{fixed-scale}: once the reciprocal
vertical Jacobi field is sufficiently large relative to its value at a
center, its maximum must double when the radius is multiplied by a fixed
number $L=L(n,\Phi)>2$. One can take $\alpha=\log 2/\log L$, although the
proof does not give an effective value of $L$. In dimensions two and three,
unrestricted rigidity is already known by the theorems of Jenkins
\cite{Jenkins} and Simon \cite{Simon}, respectively. The nontrivial range
for the growth-gap conclusion is therefore $n\geq 4$.

Earlier controlled-growth results imposed a perturbative assumption on the
energy. Simon \cite{Simon} established a Bernstein theorem in the classical
low-dimensional range for integrands sufficiently close to area. Du and the
author \cite{DY} proved an all-dimensional result for integrands sufficiently
$C^3$-close to area, with an admissible gradient-growth exponent tending to
one as the integrand tends to area. That argument uses an anisotropic
Simons inequality. Theorem~\ref{main} removes the closeness assumption, but
does not retain this quantitative information about the exponent. In
particular, it does not assert the Euclidean endpoint
$\sup_{B_R^n}|Du|=o(R)$ for a general anisotropy.

The positive vertical translation Jacobi field is available on every graph,
and its reciprocal is a subsolution of a tangentially uniformly elliptic
equation. However, a rescaled sequence of graphs can converge to a singular
minimizing boundary, and a point at which the normalized reciprocal is large
may converge to its singular set. Moreover, one cannot assume that this
limit is a cone: the usual Euclidean monotonicity formula is unavailable
for a general parametric integrand \cite{Allard}.

Our argument uses a mean-value estimate for positive truncations to retain
more than a nonzero limiting function. Failure of a fixed-scale growth
estimate would give rescalings for which the normalized reciprocals have
maximum one in $\overline B_1$, tend to zero at the center, and have upper
bounds tending to one on every fixed ball. The mean-value estimate implies
that their limit equals one on a set of positive $\Hn^n$ measure. A local
strong maximum principle then makes the limit constant on a regular
component. The nonnegative curvature term in the Jacobi equation forces
that component to be planar. This compactness step is isolated in
Proposition~\ref{maximum-compactness}.

To pass from a planar component to a single hyperplane, we use wall rigidity
for global anisotropic perimeter minimizers
\cite[Proposition 1.6]{YangHalfspace}. We include a proof of this input from
the wall-contact principle of De Philippis and Maggi \cite{DPM}. The passage
requires some care when the integrand is not even: restriction to the two
sides of the plane must preserve ambient minimality, with the orientation
reversed on the complementary phase. We give this argument in
Section~\ref{sec-planar}. The geometric input is distinct from the halfspace
graph theorem of Du, Mooney, the author, and Zhu \cite{DMYZ}, which concerns
graphs with affine boundary data; the limit here is not initially known to
be a graph. Once it is a plane, a quotient of two translation Jacobi fields
gives the final Harnack comparison, even if that plane is vertical.

The paper is organized as follows. In Section~\ref{sec-estimates}, we collect
the variational and elliptic estimates used in the proof.
Section~\ref{sec-planar} establishes the planar-component rigidity result,
and Section~\ref{sec-maximum} proves the maximum-set compactness proposition.
Finally, in Section~\ref{sec-growth} we derive the fixed-scale growth estimate
and prove Theorem~\ref{main}.

\section*{Acknowledgments}
The author is grateful to Connor Mooney for suggesting this problem and for
inspiring discussions.

\section{Variational and elliptic estimates}\label{sec-estimates}

In this section we fix the orientation conventions and collect the
estimates used in the compactness argument. Absolute minimality gives a
Sobolev inequality for compactly supported functions, from which we derive
a mean-value estimate for subsolutions. No global Neumann Poincar\'e
inequality is required. The constants in these estimates depend only on
the dimension and the fixed integrand.

For a set $E$ of locally finite perimeter, $D\chi_E$ is the
distributional derivative of its characteristic function and $|D\chi_E|$
its total variation measure. Let $\partial^*E$ denote the reduced
boundary and $\nu_E$ its measure-theoretic outer unit normal, so that
$-D\chi_E=\nu_E|D\chi_E|$. We use $\spt$ for support and
$\sigma\restr A$ for the restriction of a scalar or vector measure
$\sigma$ to a Borel set $A$. For a bounded open set
$W\subset\R^{n+1}$, define
\[
P_\Phi(E;W)=\int_{W\cap\partial^*E}\Phi(\nu_E)\,d\Hn^n.
\]
We call $E$ a global $\Phi$-perimeter minimizer if
$P_\Phi(E;W)\leq P_\Phi(F;W)$ whenever $W$ is a bounded open set, $F$ has
locally finite perimeter, and $E\mathbin{\triangle}F\subset K$ for some
compact set $K\subset W$. Inclusions and equalities between sets of finite
perimeter are understood up to Lebesgue null sets. We write
\[
S_E=\spt|D\chi_E|,\qquad
\mu_E=|D\chi_E|=\Hn^n\restr\partial^*E,
\]
where the last identity is the standard representation of the perimeter
measure \cite{Federer}. A point of $S_E$ is regular if the support is a
smooth embedded hypersurface near that point. Write $\reg S_E$ for the
set of such points and $\sing S_E=S_E\setminus\reg S_E$ for the singular
set. For the minimizing boundaries considered below, the regularity estimate \eqref{singular-size} implies that the singular
set has zero $\Hn^n$ measure. Since every regular boundary point belongs
to $\partial^*E$, the set $S_E\setminus\partial^*E$ is $\Hn^n$-null.
Consequently, $\mu_E=\Hn^n\restr S_E$ for these boundaries.

We equip $\R^{n+1}$ with its standard orientation. For
$g\in BV_{\mathrm{loc}}(\R^{n+1};\Z)$, let $\current{g}$ denote the locally
integral $(n+1)$-current defined by
\[
\current{g}(\eta)=\int_{\R^{n+1}}g\,\eta
\]
for every smooth compactly supported $(n+1)$-form $\eta$. For a set $E$
of locally finite perimeter, write $\current{E}=\current{\chi_E}$.
The boundary operator on currents is defined by
$(\partial T)(\omega)=T(d\omega)$, where $d$ is the exterior derivative
and $\omega$ is a smooth compactly supported form of the appropriate
degree. We set
\[
T_E:=\partial\current{E},
\]
the locally integral $n$-current associated with the oriented boundary
of $E$; see \cite{Federer}.

An integral current $Z$ is called a cycle when $\partial Z=0$.
For a locally integral current $T$ of any dimension, write $\|T\|$ for
its mass measure and $\M(T;W)=\|T\|(W)$ for its mass in $W$; an omitted
$W$ means the whole ambient space. For an $n$-current $T$ in
$\R^{n+1}$, let $\mathbf n(T)$ denote its oriented normal vector measure.
For $T_E$ we have $\|T_E\|=\mu_E$, and our sign convention is
$\mathbf n(T_E)=-D\chi_E$. The quotient
$d\mathbf n(T)/d\|T\|$ denotes the Radon--Nikodym density of the normal
vector measure with respect to the mass measure. For a Borel set $W$,
the oriented energy of $T$ is
\[
\M_\Phi(T;W)=\int_W\Phi\left(\frac{d\mathbf n(T)}{d\|T\|}\right)\,d\|T\|.
\]
We call $T$ absolutely minimizing for this energy if
$\M_\Phi(T;W)\leq\M_\Phi(T+Z;W)$ for every bounded open set $W$ and
every compactly supported integral $n$-cycle $Z$ with $\spt Z\Subset W$.
Choose constants $a,b>0$ such that $a\leq\Phi\leq b$ on $\Sn^n$.
Convexity and homogeneity give subadditivity of the oriented energy,
and these bounds make it comparable with ordinary mass.
Define the reflected integrand by $\Phi^-(z)=\Phi(-z)$ for
$z\in\R^{n+1}$. When $\Phi$ is not even, reversing a current replaces
$\Phi$ by $\Phi^-$.

\begin{lemma}\label{current-minimality}
If $E$ is a global $\Phi$-perimeter minimizer, then $T_E$ is absolutely
minimizing for the oriented energy $\M_\Phi$.
\end{lemma}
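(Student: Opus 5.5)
The plan is the standard reduction of current minimality to set minimality through the constancy theorem and decomposition of integral cycles into boundaries. Fix a bounded open set $W$ and a compactly supported integral $n$-cycle $Z$ with $\spt Z\Subset W$. We must show
\[
\M_\Phi(T_E;W)\leq\M_\Phi(T_E+Z;W).
\]
Since $\R^{n+1}$ has trivial $n$-th homology, the cone construction (or the isoperimetric/constancy theorem) gives a compactly supported integral $(n+1)$-current $R$ with $\partial R=Z$. Moreover, $R=\current{g}$ for some $g\in BV(\R^{n+1};\Z)$ with compact support, and we may take $\spt g\subset K$ for a compact $K\subset W$. To see this, take any filling and note that outside $\spt Z$ it is locally constant. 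The unbounded component forces the value zero there, and the remaining bounded components of the complement of $\spt Z$ that leave $W$ can be handled by choosing $W$ larger. This is harmless, because the mass comparison is local to $\spt Z$: enlarging $W$ adds the same quantity to both sides. Then $T_E+Z=\partial\current{\chi_E+g}$.

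The key step is to decompose $f=\chi_E+g\in BV_{\mathrm{loc}}(\R^{n+1};\Z)$ into level sets $F_k=\{f\geq k\}$, $k\in\Z$. By the coarea formula for integer-valued BV functions, we have $\partial\current{f}=\sum_k T_{F_k}$. Moreover, the normals of the $T_{F_k}$ all point in the same direction $-Df/|Df|$ wherever they are nonzero, and the total variation splits as $|Df|=\sum_k|D\chi_{F_k}|$. Because the normals align and $\Phi$ is positively one-homogeneous,
\[
\M_\Phi(\partial\current f;W)=\sum_k P_\Phi(F_k;W).
\]
For $k\leq 0$ and $k\geq 2$ the sets $F_k$ agree with $\R^{n+1}$ or $\emptyset$ outside $K$. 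For $k=1$, $F_1$ agrees with $E$ outside $K$, so $E\triangle F_1\subset K$, and minimality gives $P_\Phi(E;W)\leq P_\Phi(F_1;W)$. All other terms are nonnegative. Hence
\[
\M_\Phi(T_E;W)=P_\Phi(E;W)\leq\sum_kP_\Phi(F_k;W)=\M_\Phi(T_E+Z;W),
\]
using $\mathbf n(T_E)=-D\chi_E$ together with $\|T_E\|=\mu_E$ for the first equality.

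The main obstacle is the bookkeeping in the second step. We need the precise identity $\partial\current f=\sum_kT_{F_k}$ with coherent orientations, which is \cite[4.5.9]{Federer}. We also need to verify the alignment of normals, since that alignment is what makes the $\Phi$-energy additive without any evenness assumption on $\Phi$. This matters because for nonaligned pieces only subadditivity holds, which goes the wrong way. We also need the sign convention $\mathbf n(T_{F_k})=-D\chi_{F_k}$ to agree with $\mathbf n(\partial\current f)=-Df$, which follows from $Df=\sum_kD\chi_{F_k}$ as measures. A secondary technical point is ensuring $E\triangle F_1$ is compactly contained in $W$. I would handle this by replacing $W$ with a large ball containing $W$ and $\spt g$: the inequality on the ball differs from that on $W$ by the common quantity $P_\Phi(E;B\setminus\overline{W'})$, where $W'$ is a neighborhood of $\spt Z$ on which the energies differ.
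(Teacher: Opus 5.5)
Your proposal is correct and is essentially the paper's proof. The paper likewise fills $Z$ by $\current{g}$ via the cone construction and compares with $G=\{\chi_E+g\geq 1\}$, which is your $F_1$. The only difference is that it gets $\M_\Phi(T_G;B)\leq\M_\Phi(\partial\current{\chi_E+g};B)$ from the BV chain rule for the truncation $t\mapsto\min\{1,\max\{0,t\}\}$ rather than from the full level-set decomposition; this is the same alignment-of-normals mechanism, so no evenness of $\Phi$ is needed.

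One correction to your first paragraph: the filling is unique and its support need not lie in $W$ (e.g.\ $W$ an annulus containing $\partial B_2$ and $Z=\partial\current{B_2}$). So the localization you describe at the end is necessary, not optional: work in a ball $B\supset\conv(\spt Z)$, and use that the common term is $\M_\Phi(T_E;B\setminus W)$, on which $T_E$ and $T_E+Z$ agree. This is exactly how the paper localizes.
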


\begin{proof}
We recall the argument of \cite[Proposition 5.4]{YangHalfspace}. Let $Z$ be
a compactly supported integral $n$-cycle and choose a ball $B$ with
$\conv(\spt Z)\subset B$. The cone construction and the representation of
top-dimensional integral currents give $Z=\partial\current{g}$ for a
compactly supported $g\in BV(\R^{n+1};\Z)$ with $\spt g\subset B$
\cite{Federer}. Set $\vartheta=\chi_E+g$ and
$G=\{\vartheta\geq 1\}$. The monotone truncation
$\tau(t)=\min\{1,\max\{0,t\}\}$ satisfies $\tau(\vartheta)=\chi_G$.
The scalar BV chain rule expresses $D(\tau(\vartheta))$ as a nonnegative
scalar multiple, at most one, of $D\vartheta$. Thus it does not increase the oriented
$\Phi$-variation. Since $G=E$ outside $\spt g$, set minimality gives
\[
\M_\Phi(T_E;B)\leq\M_\Phi(T_G;B)
\leq\M_\Phi(\partial\current{\vartheta};B)=\M_\Phi(T_E+Z;B).
\]
The comparison is local, since the two currents agree outside the support
of $Z$.
\end{proof}

An entire solution of \eqref{graph-equation} produces such a minimizer.
Indeed, for its subgraph $E=\{(x,t):t<u(x)\}$ the outer normal is
$\nu=(-Du,1)/(1+|Du|^2)^{1/2}$. The vector field
\[
Z(x,t)=D\Phi(\nu(x))
\]
is independent of $t$ and divergence-free by \eqref{graph-equation}.
Convexity and Euler's identity give $Z\cdot\xi\leq\Phi(\xi)$ for every
$\xi\in\R^{n+1}$, with equality at $\xi=\nu$. The associated closed $n$-form
calibrates $\partial\current{E}$, proving absolute minimality directly.
Translations and positive dilations preserve this property and the
integrand.

We will use the following consequences of minimizing-boundary compactness
and regularity:
\begin{equation}\label{density}
cr^n\leq\mu_E(B_r(X))\leq Cr^n\qquad(X\in S_E,\ r>0).
\end{equation}
If $E_j$ are global minimizers and $0\in S_{E_j}$, then, after passage to
a subsequence,
\begin{equation}\label{compactness}
\chi_{E_j}\to\chi_E\text{ in }L^1_{\mathrm{loc}},\qquad
D\chi_{E_j}\stackrel{*}{\rightharpoonup}D\chi_E,\qquad
\mu_{E_j}\stackrel{*}{\rightharpoonup}\mu_E,
\end{equation}
where $E$ is a global minimizer, $0\in S_E$, and the supports converge
locally in Hausdorff distance. The density bounds and the full-space form
of this compactness statement follow from \cite[Lemma 2.8 and Theorem
2.9]{DPM}. The convergence of the unweighted perimeter measures in
\eqref{compactness} is part of the minimizing compactness theorem, not
merely BV lower semicontinuity.

The singular set $\sing S_E$, defined above, is closed.
The regularity theory of Schoen, Simon, and Almgren \cite{SSA}
gives, in particular, the estimate
\begin{equation}\label{singular-size}
\Hn^{n-1}(\sing S_E)=0.
\end{equation}
For a precise sufficient statement one may use
\cite[Corollary 2.5]{Figalli}. Lemma~\ref{current-minimality} supplies the
oriented absolute minimality required there, and boundary currents have
multiplicity one. Our smooth, positively one-homogeneous integrand
satisfies the convexity and ellipticity assumptions of that result after
multiplication by a positive constant; evenness is not required.

On compact subsets of $\reg S_E$, convergence in \eqref{compactness} is
one-sheet graphical and smooth, with the outer normals converging as well.
We spell out why no additional sheets can be lost in this assertion.
Write $\mu_j=\mu_{E_j}$ and $\nu_j=\nu_{E_j}$ for the perimeter
measure and outer unit normal of $E_j$. For a fixed unit vector $N$
and a nonnegative $\eta\in C_c(\R^{n+1})$, the orientation convention gives
\[
\int\eta|\nu_{E_j}-N|^2\,d\mu_{E_j}
=2\int\eta\,d\mu_{E_j}+2\int\eta\,N\cdot dD\chi_{E_j}.
\]
Both terms on the right converge by \eqref{compactness}; hence the
weighted oriented excess converges to that of $E$. Near a regular point
of $S_E$, first choose a small cylinder in which the limiting boundary is
a graph with small slope over its tangent plane, oriented by $N$.
The preceding identity, with cutoffs supported in a slightly larger
cylinder, transfers small excess to $E_j$. Support convergence excludes
boundary points near the top and bottom faces; local $L^1$ convergence
then fixes the two phases there. Consequently the boundary current has
projection of multiplicity one onto the base disk. The flat regularity
theorem \cite[Theorem 2.1]{Figalli} applies and represents the whole
support in a smaller cylinder as a single $C^{1,\gamma}$ graph for some
$\gamma\in(0,1)$, with uniform local estimates. Local elliptic estimates for the fixed smooth
integrand upgrade this to smooth convergence. A finite covering gives
the assertion on each compact regular subset. Neither this assertion nor
\eqref{singular-size} assumes that $S_E$ is a cone.

Let now $\Sigma=S_E$ be smooth, with outer unit normal $\nu=\nu_E$
and induced measure $\mu=\Hn^n\restr\Sigma$. Let $A$ be its second
fundamental form, viewed as a self-adjoint endomorphism of $T\Sigma$,
and write $\nabla_\Sigma$ and $\Div_\Sigma$ for the tangential gradient
and divergence. Put
\begin{equation}\label{jacobi-operator}
P=D^2\Phi(\nu)|_{T\Sigma},\qquad
L=\Div_\Sigma(P\nabla_\Sigma),\qquad Q=\tr(PA^2).
\end{equation}
In particular, $Q\geq\lambda|A|^2$. The same notation is used on
regular portions of minimizing boundaries. For a sequence of smooth
hypersurfaces $\Sigma_j$ or a minimizing support $S$, subscripts $j$
and $S$ on $A,P,L,Q,\nu,\mu$ indicate the corresponding objects on
$\Sigma_j$ and $\reg S$, respectively. We abbreviate $\nabla_\Sigma$ to $\nabla$
when the hypersurface is clear. Translation invariance gives, for
every constant vector $e\in\R^{n+1}$,
\begin{equation}\label{translation-jacobi}
L(\nu\cdot e)+Q(\nu\cdot e)=0.
\end{equation}
This is the divergence-form Jacobi identity; it follows by differentiating
the first variation under translations, or from the anisotropic second
variation formula \cite[Section 2]{DY}. On an upward graph,
\[
f=\nu\cdot e_{n+1}=(1+|Du|^2)^{-1/2}>0,\qquad v=f^{-1}.
\]
For a tangent vector $\xi$, set $|\xi|_P^2=\langle P\xi,\xi\rangle$.
The chain rule yields the identity
\begin{equation}\label{reciprocal-jacobi}
Lv=Qv+2v^{-1}|\nabla_\Sigma v|_P^2\geq Qv\geq 0.
\end{equation}
Differential inequalities are understood weakly; our sign convention
is that $Lz\geq0$ is the subsolution inequality.

\begin{lemma}[Mean-value estimate]\label{mean-value}
Suppose $n\geq 3$ and $\Sigma$ is a smooth global $\Phi$-minimizing
boundary. There are positive constants $C_S,C_M$, depending only on $n,\Phi$,
such that, for every $\zeta\in C_c^1(\Sigma)$,
\begin{equation}\label{sobolev}
\left(\int_\Sigma|\zeta|^{2n/(n-2)}\,d\mu\right)^{(n-2)/n}
\leq C_S\int_\Sigma|\nabla_\Sigma\zeta|^2\,d\mu.
\end{equation}
For every $X\in\R^{n+1}$ and $r>0$, every nonnegative weak subsolution $Lz\geq 0$ in $\Sigma\cap B_{2r}(X)$
satisfies
\begin{equation}\label{mean-value-estimate}
\esssup_{\Sigma\cap B_r(X)}z
\leq C_Mr^{-n/2}
\left(\int_{\Sigma\cap B_{2r}(X)}z^2\,d\mu\right)^{1/2}.
\end{equation}
The constant is unchanged when $z$ is replaced by a positive truncation
of a subsolution.
\end{lemma}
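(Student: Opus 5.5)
The plan is to derive the Sobolev inequality \eqref{sobolev} from the Michael--Simon inequality, and then obtain \eqref{mean-value-estimate} by De Giorgi--Nash--Moser iteration for the operator $L$.

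\textbf{Step 1: Sobolev inequality.} Start from the Michael--Simon inequality for smooth hypersurfaces in $\R^{n+1}$:
\[
\Big(\int_\Sigma|\zeta|^{n/(n-1)}\,d\mu\Big)^{(n-1)/n}\leq C(n)\int_\Sigma\big(|\nabla_\Sigma\zeta|+|H||\zeta|\big)\,d\mu .
\]
The mean curvature $H$ is not zero for anisotropic minimizers, so its term has to be removed using minimality. I would try the following route.

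\emph{First option: anisotropic divergence identity.} The anisotropic first variation gives
\[
\int_\Sigma\Div_\Sigma\big(D\Phi(\nu)\text{-adjusted field}\big)=0 .
\]
Concretely, the anisotropic Michael--Simon inequality uses the Cahn--Hoffman field $D\Phi(\nu)$. For a $\Phi$-stationary $\Sigma$, the tangential divergence of the anisotropic tangent projection $B(X)=X-\langle X,\nu\rangle D\Phi(\nu)/\Phi(\nu)$, weighted by $\Phi(\nu)$, has vanishing first variation. Running the Michael--Simon/Allard argument with this weighted first variation then gives the $L^1$ isoperimetric inequality with constant depending on $n,\lambda,\Lambda$, and with no curvature term.

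\emph{Fallback: calibration argument.} Use the uniform density bounds \eqref{density} together with the calibration/minimality. Apply a coarea argument to superlevel sets $\{\zeta>t\}$. Each such set is a region $\Omega\subset\Sigma$, and minimality of $\Sigma$ lets us compare $\Omega$ with a filling of $\partial\Omega$ by an $n$-chain. The isoperimetric inequality for integral currents (Federer--Fleming) gives a filling $F$ with $\M(F)\leq C\,\M(\partial\Omega)^{n/(n-1)}$. Absolute minimality (Lemma~\ref{current-minimality}) then yields
\[
\Hn^n(\Omega)\leq (b/a)\,C\,\Hn^{n-1}(\partial\Omega)^{n/(n-1)} .
\]
This is exactly the isoperimetric inequality needed for the $L^1$ Sobolev inequality via coarea, and it is the argument I would actually commit to. It needs only minimality of $T_E$ against cycles $T_\Omega-F$ supported in a bounded set, which Lemma~\ref{current-minimality} provides.

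From the $L^1$ inequality, apply it to $|\zeta|^{2(n-1)/(n-2)}$ and use H\"older. This gives \eqref{sobolev}, which is where $n\geq3$ enters.

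\textbf{Step 2: Moser iteration.} With \eqref{sobolev} in hand, run Moser iteration for nonnegative weak subsolutions of $L=\Div_\Sigma(P\nabla)$. The operator has uniformly elliptic coefficients: $\lambda\leq P\leq\Lambda$ by \eqref{ellipticity}.

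For $\beta\geq1$ and a cutoff $\eta$ supported in $B_{\rho'}(X)$, equal to $1$ on $B_\rho(X)$, with $|\nabla\eta|\leq 2/(\rho'-\rho)$, test $Lz\geq0$ against $\eta^2z^{\beta}$ (suitably truncated to keep it admissible). This gives the Caccioppoli inequality
\[
\int\eta^2|\nabla z^{(\beta+1)/2}|^2\leq C\beta\int|\nabla\eta|^2z^{\beta+1} .
\]
Combined with \eqref{sobolev}, it improves integrability by the factor $\kappa=n/(n-2)$ on shrinking balls between $r$ and $2r$. Iterating gives
\[
\sup_{B_r}z\leq C\Big(r^{-n}\int_{B_{2r}}z^2\Big)^{1/2} .
\]
The $r$-scaling is consistent because \eqref{sobolev} is scale-invariant. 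No volume doubling is needed beyond the Sobolev inequality itself, although \eqref{density} is also available. The cutoffs are ambient functions restricted to $\Sigma$, so $\zeta=\eta z^{(\beta+1)/2}$ lies in $C_c^1$ after mollification, because $\Sigma\cap\overline{B_{2r}}$ is compact by properness.

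\textbf{Step 3: Truncations.} If $z$ is a subsolution, then $(z-k)^+$ is again a nonnegative weak subsolution (Kato-type inequality: test with $\eta^2(z-k)^+$-type functions), with the same ellipticity constants. The same iteration therefore applies with the same $C_M$.

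\textbf{Main obstacle.} The hardest step is Step 1: obtaining a Sobolev inequality with constant depending only on $n,\Phi$ despite nonzero mean curvature. I would rely on the isoperimetric-via-minimality argument. The remaining care is in verifying that $T_\Omega-F$ is an admissible cycle; boundary regularity of $\Omega$ is needed only for a.e.\ $t$, by coarea.
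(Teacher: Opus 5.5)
Your committed route is essentially the paper's proof: a Federer--Fleming filling of $\partial T_\Omega$ plus absolute minimality (Lemma~\ref{current-minimality}) gives $a\,\Hn^n(\Omega)\leq C(n)\,b\,\Hn^{n-1}(\partial\Omega)^{n/(n-1)}$, then coarea and H\"older on $|\zeta|^{2(n-1)/(n-2)}$, then Moser iteration with ambient cutoffs, and finally the convex-truncation remark. Drop the anisotropic Michael--Simon \emph{first option}: as sketched it is not justified, because the Michael--Simon argument rests on a monotonicity-type identity that is unavailable for a general integrand.
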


\begin{proof}
For a relatively compact smooth domain $U\subset\Sigma$, let
$\partial U$ be its boundary in $\Sigma$ and $T_U$ its integration
current with the orientation induced from $\Sigma$. The isoperimetric theorem for integral currents
\cite{FF}, in the form \cite[4.2.10]{Federer}, gives a compactly supported
filling $S$ of $\partial T_U$ with
\[
\M(S)\leq C(n)\M(\partial T_U)^{n/(n-1)}.
\]
Absolute minimality passes to the oriented subcurrent $T_U$: one compares
$T_E$ with $T_E-T_U+S$ and cancels the unchanged, finite residual energy
in a large ball. Consequently
\[
a\,\Hn^n(U)\leq\M_\Phi(T_U)\leq\M_\Phi(S)
\leq C(n)b\,\Hn^{n-1}(\partial U)^{n/(n-1)}.
\]
The coarea formula gives the $W^{1,1}$ Sobolev inequality. Applying it to
$|\zeta|^{2(n-1)/(n-2)}$ and using Cauchy--Schwarz proves
\eqref{sobolev}.

For the second assertion, choose a nonnegative smooth ambient cutoff
$\eta$ supported in $B_{2r}(X)$ and an exponent $p\geq2$.
Test the weak subsolution inequality with $\eta^2z^{p-1}$, using
positive regularization and truncation when necessary. Ellipticity gives
\[
\int_\Sigma|\nabla_\Sigma(\eta z^{p/2})|^2\,d\mu
\leq Cp^2\int_\Sigma z^p|\nabla_\Sigma\eta|^2\,d\mu.
\]
Combine this with \eqref{sobolev} and iterate with exponents
$2(n/(n-2))^\ell$, $\ell=0,1,2,\ldots$, on nested ambient balls. The usual Moser iteration
\cite[Chapter 8]{GT} proves \eqref{mean-value-estimate}.
Ambient cutoffs have compact support on the closed embedded hypersurface
and satisfy $|\nabla_\Sigma\eta|\leq|D\eta|$. No connectedness assumption
on the intersection with a ball is needed. Finally, for any real threshold $k$, $(z-k)_+$ is a weak
subsolution whenever $z$ is one, as follows by convex approximation.
\end{proof}

\Needspace{8.8cm}
Figure~\ref{fig:no-spikes} illustrates the mean-value estimate in
Lemma~\ref{mean-value}. At fixed $r>0$, an interior essential supremum
of one forces the indicated lower bound on the integral over
$\Sigma\cap B_{2r}(X)$. Thus unit-height peaks with vanishing total
$L^2$ mass are excluded. The blue curve is a schematic spatial profile,
not an explicit subsolution or a time-dependent graph. This statement
concerns $n$-dimensional integral mass, not the width of a peak along
every one-dimensional slice; it also applies to positive truncations.
\begin{figure}[H]
\centering
\begin{tikzpicture}[x=1cm,y=1cm,font=\small]
\path[use as bounding box] (0,0) rectangle (15.65,3.8);
\begin{scope}[shift={(.55,.65)}]
  \draw[ggaxis] (0,0)--(8.25,0);
  \draw[ggaxis] (0,0)--(0,2.65) node[above] {$z$};
  \node[left] at (0,0) {$0$};
  \draw[gggray,densely dashed,line width=.5pt] (0,2.0)--(7.9,2.0);
  \node[left] at (0,2.0) {$1$};
  \fill[ggred!16] (0,0) -- plot[domain=0:7.8,samples=450]
   (\x,{2*exp(-((\x-1.0)/.070)^2)+2*exp(-((\x-3.8)/.070)^2)+2*exp(-((\x-6.5)/.070)^2)}) -- (7.8,0)--cycle;
  \draw[ggline] plot[domain=0:7.8,samples=450]
   (\x,{2*exp(-((\x-1.0)/.070)^2)+2*exp(-((\x-3.8)/.070)^2)+2*exp(-((\x-6.5)/.070)^2)});
  \draw[ggred,line width=1.8pt] (7.45,2.55)--(7.85,2.15) (7.45,2.15)--(7.85,2.55);
\end{scope}
\node[anchor=west] at (9.15,2.25) {$\displaystyle\esssup_{\Sigma\cap B_r(X)}z=1$};
\node[anchor=west] at (9.15,1.35) {$\displaystyle\Longrightarrow\quad\int_{\Sigma\cap B_{2r}(X)}z^2\,d\mu\geq C_M^{-2}r^n.$};
\end{tikzpicture}
\space
\caption{No spikes.}
\label{fig:no-spikes}
\end{figure}

\section{Planar components}\label{sec-planar}

In this section we show that a planar regular component of a global
minimizing boundary determines the whole boundary. We first prove the
wall-rigidity statement needed for this purpose. The small singular set
and an oriented restriction argument then reduce the planar-component
assertion to that statement.

The following result is \cite[Proposition 1.6]{YangHalfspace}. We recall
a proof based on the wall-contact principle \cite[Lemma 2.13]{DPM}, so
that its use below does not require a further rigidity input.

\begin{lemma}[Wall rigidity]\label{wall-rigidity}
Let $H$ be an open halfspace and let $F$ be a global $\Phi$-perimeter
minimizer. Then
\begin{equation}\label{wall}
F\subset H,\quad \spt|D\chi_F|\cap\partial H\ne\varnothing
\quad\Longrightarrow\quad F=H.
\end{equation}
\end{lemma}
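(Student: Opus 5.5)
Our plan is to slide the boundary $\partial H$ inward and use the wall-contact principle of De Philippis and Maggi \cite[Lemma 2.13]{DPM} to propagate contact. After a rigid motion we may take $H=\{x_{n+1}<0\}$, so that $\partial H=\{x_{n+1}=0\}$ and $F\subset H$. The complement $\R^{n+1}\setminus F$ is a global $\Phi^-$-minimizer: the same boundary, with orientation reversed. It contains the closed upper halfspace. The wall-contact principle, as we intend to use it, says the following. Let $G$ be a $\Phi$-minimizer contained in one side of a hyperplane (the ``wall''), and let $\spt|D\chi_G|$ touch the wall at a point $X_0$. Then in a neighbourhood of $X_0$ the boundary $\spt|D\chi_G|$ coincides with the wall. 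The hyperplane is itself a $\Phi$-minimizer, and the lemma compares the two, using the strong maximum principle for the uniformly elliptic equation \eqref{graph-equation} once regularity at the contact point has been established. The regularity comes from the blow-up: near $X_0$, $F$ lies in a halfspace and touches the wall, so the tangent objects are squeezed onto the wall.

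The argument has four steps.

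\textbf{Step 1.} Apply the contact principle at a point $X_0\in\spt|D\chi_F|\cap\partial H$. This gives $\rho>0$ such that $\spt|D\chi_F|\cap B_\rho(X_0)=\partial H\cap B_\rho(X_0)$. Combined with $L^1$ information (the set $F\cap B_\rho(X_0)$ is nonempty, since $X_0$ lies in its support), this yields $F\cap B_\rho(X_0)=H\cap B_\rho(X_0)$.

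\textbf{Step 2.} Consider the set
\[
\mathcal C=\{X\in\partial H:\ \spt|D\chi_F|\supset\partial H\cap B_r(X)\text{ for some }r>0\}.
\]
By Step 1, $\mathcal C$ is nonempty. It is open in $\partial H$ by definition. It is closed in $\partial H$: supports are closed, so a limit point of $\mathcal C$ lies in $\spt|D\chi_F|\cap\partial H$, and the contact principle applies there again. Since $\partial H$ is connected, $\mathcal C=\partial H$, and hence $\partial H\subset\spt|D\chi_F|$.

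\textbf{Step 3.} Show that $F$ has no further boundary. Suppose $Y\in\spt|D\chi_F|$ with $Y_{n+1}<0$. Near $\partial H$ the set $F$ agrees with $H$ in a uniform slab, by the local statement of Step 1 together with the density estimates \eqref{density}. Hence $\R^{n+1}\setminus F$ contains the closed upper halfspace together with a nonempty open piece of $H$ around $Y$. Now translate the wall downward, $\{x_{n+1}=-t\}$, and let $t_*$ be the infimum of heights $t$ at which the wall meets $\spt|D\chi_F|\cap\{x_{n+1}<-\delta\}$. Apply the contact principle at that touching height with the roles of the phases reversed, using $\Phi^-$ for the complement. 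This would force an entire parallel hyperplane into the support, splitting $F$ into a slab.

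\textbf{Step 4.} Exclude the slab by a minimality comparison. A slab $\{-s<x_{n+1}<0\}$ has two boundary planes with energy density $\Phi(e_{n+1})+\Phi(-e_{n+1})$ per unit area. Filling the slab over a large disc of radius $R$ replaces area of order $R^n$ by lateral area of order $sR^{n-1}$, contradicting minimality.

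\textbf{Main obstacle.} Step 3 is the delicate part, because the infimum height $t_*$ need not be attained. The support below may approach the wall only asymptotically, so a direct touching argument can fail. The plan is to handle this by compactness \eqref{compactness}. Translate horizontally along points approaching the infimum, and pass to a limit minimizer. In the limit the wall genuinely touches, and Steps 1--2 apply to the limit. The resulting limit slab is then ruled out by the comparison of Step 4, which is stable under the limit because perimeter measures converge.
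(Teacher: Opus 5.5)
Your Step 1 is where the gap lies. The wall-contact principle of De Philippis--Maggi, as invoked in the paper's proof, gives only that $\chi_F$ has full one-sided trace on a smaller wall disk. Applying it in arbitrarily large balls then gives $\Tr_{\partial H}\chi_F=1$ a.e.\ on all of $\partial H$. Full trace puts $\partial H$ inside $\spt|D\chi_F|$, but it says nothing about further boundary of $F$ inside $H$: the ``holes'' $K=H\setminus F$ could a priori reach up to the wall, near $X_0$ or at infinity.

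Your sketch of why the principle should give local coincidence of supports is circular. Blow-ups of $F$ at $X_0$ are global minimizers contained in $\overline H$ and touching $\partial H$ at the origin. Without a monotonicity formula they need not be cones, and identifying them with the halfspace is exactly the statement you are proving. So ``regularity at the contact point'' is not available to feed into a strong maximum principle. Note also that a version giving coincidence on a ball comparable to the minimizing scale would finish the proof at once, by letting that scale tend to infinity, and Steps 2--4 would be superfluous.

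The same issue resurfaces in Step 3. The ``uniform slab'' does not follow from point-dependent radii and density estimates. Without it there is no $\delta>0$ from which your slide can start. Your compactness remedy handles a positive infimum $t_*$ that is not attained, but not the case where boundary of $F$ in $H$ accumulates at $\partial H$.

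The missing idea is to show that $K=H\setminus F$ is itself a minimizer. Once $\Tr_{\partial H}\chi_F=1$ everywhere, one has $|D\chi_K|(\partial H)=0$ and $T_F=T_H-T_K$ with mutually singular mass measures. Subadditivity and cancellation of the finite plane term then show that $-T_K$ is absolutely minimizing, so $K$ is a global $\Phi^-$-perimeter minimizer contained in $H$.

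If $|K|>0$, blow down along $R_j\to\infty$ at a boundary point of $K$. The limits $F_\infty$ and $K_\infty$ both touch $\partial H$ at the origin, so the contact principle gives each of them trace one on the wall. This contradicts $\chi_{F_\infty}+\chi_{K_\infty}=\chi_H$.

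This single argument replaces your Steps 1--4, and no sliding or slab comparison is needed. It would also yield your local coincidence in Step 1, since a $K$ touching the wall near $X_0$ would have trace one there as well.
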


\begin{proof}
Translate a contact point to zero and put $P=\partial H$.
Write $\Tr_P$ for the one-sided BV trace on $P$ taken from $H$.
The wall-contact principle of De Philippis and Maggi states that a minimizer
contained in $H$ and touching $P$ at zero has full one-sided BV trace on
a smaller wall disk. Its hypothesis is ambient minimality, with
competitors allowed to cross $P$, and it applies without an evenness
assumption. Applying it in arbitrarily large balls gives
$\Tr_P\chi_F=1$ almost everywhere on $P$.

Let $K=H\setminus F$. Then $\Tr_P\chi_K=0$, and the BV zero-extension
formula gives $|D\chi_K|(P)=0$. Since $\chi_F+\chi_K=\chi_H$,
\[
T_F=T_H-T_K.
\]
The mass measures of $T_H$ and $T_K$ are mutually singular, so this
splitting is additive for oriented energy. For any compactly supported
integral cycle $Z$, choose a ball $B$ with $\conv(\spt Z)\subset B$.
Lemma~\ref{current-minimality} and subadditivity give
\begin{align*}
\M_\Phi(T_H;B)+\M_\Phi(-T_K;B)
&=\M_\Phi(T_F;B)\\
&\leq\M_\Phi(T_F+Z;B)\\
&\leq\M_\Phi(T_H;B)+\M_\Phi(-T_K+Z;B).
\end{align*}
Cancel the finite plane term. Thus $-T_K$ is absolutely
$\Phi$-minimizing, and $K$ is a global $\Phi^-$-perimeter minimizer.

Suppose $K$ has positive measure. Its boundary support is nonempty,
since otherwise $\chi_K$ would be constant almost everywhere in
$\R^{n+1}$, contrary to $K\subset H$ and $|K|>0$. Fix
$X\in\spt|D\chi_K|$ and let $R_j\to\infty$. Apply minimizing
compactness to $F_j=R_j^{-1}F$ and $K_j=R_j^{-1}K$, taking a common
subsequence. The limits $F_\infty,K_\infty\subset H$ minimize
$\Phi,\Phi^-$, respectively, and
\begin{equation}\label{phase-identity}
\chi_{F_\infty}+\chi_{K_\infty}=\chi_H
\qquad\text{almost everywhere},
\end{equation}
by local $L^1$ convergence. Support persistence gives
$0\in\spt|D\chi_{F_\infty}|$, since zero belongs to every
$\spt|D\chi_{F_j}|$. It also gives $0\in\spt|D\chi_{K_\infty}|$,
since $X/R_j\in\spt|D\chi_{K_j}|$ and $X/R_j\to 0$. Applying the
wall-contact principle separately to the two limits shows that both have
trace one on $P\cap B_1$. But the one-sided BV trace of
\eqref{phase-identity} gives
\[
\Tr_P\chi_{F_\infty}+\Tr_P\chi_{K_\infty}=1
\qquad\text{almost everywhere on }P.
\]
This is a contradiction. Here linearity of the trace is used only for
the limiting identity; no convergence of the traces of $F_j$ or $K_j$
is asserted. Hence $K$ is null and $F=H$.
\end{proof}

\Needspace{9.4cm}
Figure~\ref{fig:wall} depicts Lemma~\ref{wall-rigidity} in a schematic
section, with $H$ above $P=\partial H$ and $F$ shaded more darkly.
In panel (a), $F\subset H$ touches $P$ at zero without filling $H$;
this configuration is excluded for a global $\Phi$-perimeter minimizer.
Panel (b) shows the resulting configuration $F=H$, up to null sets.
The global comparison class is essential: competitors may cross $P$,
so the wall is not an obstacle. The frames only truncate the view.
\begin{figure}[H]
\centering
\begin{tikzpicture}[x=1cm,y=1cm,font=\small]
\path[use as bounding box] (0,0) rectangle (15.65,4.1);
\node[font=\small\bfseries,text=ggred] at (3.25,3.87) {(a)};
\node[font=\small\bfseries,text=ggblue] at (12.25,3.87) {(b)};
\fill[gglight!55] (.35,.7) rectangle (6.15,3.45);
\begin{scope}
\clip (.35,.7) rectangle (6.15,3.45);
\fill[ggblue!22] (1.65,3.45) -- plot[domain=-1.6:1.6,samples=100] ({3.25+\x},{.7+1.075*\x*\x}) -- (4.85,3.45)--cycle;
\draw[ggline] plot[domain=-1.6:1.6,samples=100] ({3.25+\x},{.7+1.075*\x*\x});
\end{scope}
\draw[gggray,line width=.7pt] (.05,.7)--(6.5,.7);
\fill[ggred] (3.25,.7) circle (1.5pt);
\node[anchor=south east] at (6.13,3.05) {$H$};
\node at (3.25,2.61) {$F$};
\node[anchor=north] at (3.25,.57) {$0$};
\node[anchor=north east] at (6.45,.56) {$P=\partial H$};
\draw[ggarrow,gggray] (6.8,2.13)--(8.83,2.13);
\fill[ggblue!22] (9.35,.7) rectangle (15.15,3.45);
\draw[gggray,line width=.7pt] (9.05,.7)--(15.48,.7);
\node[anchor=south east] at (15.13,3.05) {$H$};
\node[font=\large] at (12.25,2.12) {$F=H$};
\node[anchor=north east] at (15.43,.56) {$P=\partial H$};
\end{tikzpicture}
\space
\caption{Wall rigidity.}
\label{fig:wall}
\end{figure}

\begin{lemma}[Planar component]\label{planar-component}
Let $E$ be a global $\Phi$-perimeter minimizer, and suppose that a
connected component $U$ of $\reg S_E$ has $A\equiv 0$. Then $E$ is a
halfspace.
\end{lemma}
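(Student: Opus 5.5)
Since $A\equiv0$ on the connected component $U$, the normal is constant on $U$, so $U$ is a relatively open subset of one affine hyperplane $\Pi$, and we may write $\Pi=\partial H$ with $H$ the open halfspace into which the outer normal $\nu_E$ on $U$ points. The goal is to show that $U$ is the whole of $\Pi$. Then $E$ and its complement each lie on one side of $\Pi$, and Lemma~\ref{wall-rigidity} finishes the proof.

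\emph{Step 1: $\overline U=\Pi$.} Let $V$ be the relative interior of $\overline U$ in $\Pi$. Every boundary point of $U$ in $\Pi$ lies in $S_E$, since $S_E$ is closed. Suppose $Y\in\partial_\Pi U$ is a regular point. Near $Y$, $S_E$ is a connected smooth hypersurface containing an open piece of $\Pi$. The anisotropic minimal surface equation is analytic-free, but it is a quasilinear elliptic equation satisfied by both $S_E$ and $\Pi$ locally as graphs. Unique continuation, or simply the strong maximum principle applied to the difference of graph functions, forces $S_E=\Pi$ near $Y$. Hence $Y\in U$, a contradiction. Thus $\partial_\Pi U\subset\sing S_E$, which has $\Hn^{n-1}$-measure zero by \eqref{singular-size}. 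A closed subset of $\Pi$ with zero $\Hn^{n-1}$ measure cannot disconnect the connected $n$-dimensional plane $\Pi$. Since $U$ is nonempty and open in $\Pi$, and $\Pi\setminus\partial_\Pi U$ is connected, it follows that $\Pi\setminus\partial_\Pi U\subset U$. So $\Pi\subset S_E$, and $U$ is dense in $\Pi$ up to a null set.

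\emph{Step 2: splitting $E$ by $\Pi$.} This is the delicate step, especially when $\Phi$ is not even. Set $F=E\cap H$ and $K=E\setminus \overline H$. Because $\Pi\cap\partial^*E$ has full measure in $\Pi$, with outer normal constant and equal to the normal of $H$, the traces of $\chi_E$ on $\Pi$ are the following: from the side of $H$ the trace is $0$, and from the opposite side it is $1$. I would check this sign carefully; if it comes out reversed, swap the roles of $H$ and its complement. Consequently the perimeter measures of $E\cap\overline H^c$, of $E\cap H$ and of the plane combine additively. Write $T_E$ as a sum of $T_{\R^{n+1}\setminus\overline H}$ and two currents supported in the two closed halfspaces, with mutually singular mass measures away from $\Pi$. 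Then repeat the cancellation argument from the proof of Lemma~\ref{wall-rigidity}: use subadditivity and Lemma~\ref{current-minimality}, and cancel the finite plane term in a ball. This shows that each piece, with appropriate orientation, is absolutely minimizing. It follows that $E^c\cap H$ is a global $\Phi^-$-minimizer contained in $H$, and $E\setminus\overline H$ is a global $\Phi$-minimizer contained in the complementary open halfspace. The orientation bookkeeping here, namely which piece is reversed and hence minimizes $\Phi^-$, is what I expect to be the main obstacle. It requires the zero-extension BV formula to ensure that no mass of the pieces sits on $\Pi$.

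\emph{Step 3: conclusion.} Suppose $E^c\cap H$ has positive measure. Apply Lemma~\ref{wall-rigidity}, with integrand $\Phi^-$, to this set, which lies inside $H$. If its boundary support touches $\Pi$, the lemma gives $E^c\cap H=H$. If its boundary support does not touch $\Pi$, argue by blow-down as in that lemma, or note that its boundary support is then nonempty and disjoint from $\Pi$; translating the wall parallel until first contact is not allowed in general. So instead I would argue as follows. Its trace on $\Pi$ is $1$ by Step 1, since $\Pi\subset\partial^*E$ with $E^c$ on the $H$ side. Zero extension then shows that it either equals $H$ or has boundary meeting $\Pi$, and in both cases the lemma yields $E^c\cap H=H$. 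The same argument on the other side gives $E\setminus\overline H$ equal to the complementary halfspace. Hence $E$ is that halfspace, up to a null set.
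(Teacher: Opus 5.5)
Your proposal is correct and follows essentially the paper's argument. You show $\partial_\Pi U\subset\sing S_E$, so that the $\Hn^{n-1}$-null singular set cannot disconnect $\Pi$ and $\Pi\subset S_E$; you then split $E$ along $\Pi$ by BV zero extension and mutual singularity of the pieces, and apply wall rigidity to $E\setminus\overline H$ (for $\Phi$) and to $E^c\cap H$ (for $\Phi^-$), exactly as the paper does with its $F$ and $G$. The unique-continuation step in Step~1 is superfluous, since a regular point of $\overline U$ already lies in $U$ because connected components of the manifold $\reg S_E$ are closed in it; the strong-maximum-principle alternative would not work as stated anyway, because the graph difference need not have a sign.
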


\begin{proof}
The connected hypersurface $U$ is an open subset of an affine $n$-plane
$\Pi$, with constant outer normal. Write $\partial_\Pi U$ for the
topological boundary of $U$ relative to $\Pi$. We claim that
\begin{equation}\label{relative-boundary}
\partial_\Pi U\subset\sing S_E.
\end{equation}
A point in $\partial_\Pi U$ belongs to $S_E$ since the support is
closed. If it were regular, it would lie in the closure of $U$ inside
the manifold $\reg S_E$. A connected component is closed in that
manifold, so the point would belong to $U$. This contradicts the fact
that $U$ is open in $\Pi$.

Set $K=\Pi\cap\sing S_E$. It is closed, and $\Hn^{n-1}(K)=0$ by
\eqref{singular-size}. Such a set does not disconnect an $n$-plane.
For completeness, fix $a,b\in\Pi\setminus K$.
From either $a$ or $b$, the directions of segments meeting $K$ form a
set of zero $(n-1)$-dimensional spherical measure: apply radial
projection to countably many compact pieces separated from the point.
Almost every intermediate point $q\in\Pi$, with respect to
$\Hn^n\restr\Pi$, therefore gives a path $[a,q]\cup[q,b]$ avoiding $K$. By \eqref{relative-boundary}, $U$ is
both open and closed in $\Pi\setminus K$. Hence
\begin{equation}\label{whole-plane}
U=\Pi\setminus K,\qquad\Pi\subset S_E.
\end{equation}

Let $H^+$ and $H^-$ be the two open halfspaces bounded by $\Pi$,
labeled so that $\chi_E$ has trace one from $H^+$ and trace zero from
$H^-$ along the regular part of $\Pi$. By \eqref{whole-plane} these
traces hold almost everywhere on $\Pi$. Set $F=E\cap H^+$. The BV
zero-extension formula then gives the vector-measure identity
\begin{equation}\label{restriction}
D\chi_F=D\chi_E\restr(H^+\cup\Pi).
\end{equation}
Indeed, inside $H^+$ the derivatives agree, outside its closure both
sides vanish, and the trace jump on $\Pi$ is the same. Thus $T_F$ is an
oriented restriction of $T_E$, and its mass measure is mutually singular
with that of $T_R=T_E-T_F$. In particular,
\begin{equation}\label{additive-splitting}
\M_\Phi(T_E;B)=\M_\Phi(T_F;B)+\M_\Phi(T_R;B)
\end{equation}
for every bounded Borel set $B$. Both $T_F$ and $T_R$ are locally
integral cycles.

Let $Z$ be a compactly supported integral cycle, and choose a ball $B$
with $\conv(\spt Z)\subset B$. Lemma~\ref{current-minimality},
\eqref{additive-splitting}, and subadditivity imply
\begin{align*}
\M_\Phi(T_F;B)+\M_\Phi(T_R;B)
&\leq\M_\Phi(T_E+Z;B)\\
&\leq\M_\Phi(T_F+Z;B)+\M_\Phi(T_R;B).
\end{align*}
All terms are finite. Cancelling the residual term shows that $T_F$ is
absolutely minimizing. In particular, $F$ is a global
$\Phi$-perimeter minimizer, not only a minimizer relative to $H^+$.
Since $F\subset H^+$ and its boundary contains $\Pi$, \eqref{wall}
gives $F=H^+$.

The complement $E^c$ is a global $\Phi^-$-perimeter minimizer.
Applying the same argument to $G=E^c\cap H^-$ gives $G=H^-$.
Thus $E$ fills $H^+$ and is empty in $H^-$, proving the assertion.
\end{proof}

\Needspace{8.0cm}
Figure~\ref{fig:thin-set} illustrates the connectivity step in
Lemma~\ref{planar-component}. For $K=\Pi\cap\sing S_E$ and
$a,b\in\Pi\setminus K$, an intermediate point $q$ can be chosen
so that $[a,q]\cup[q,b]\subset\Pi\setminus K$. Thus
$\Pi\setminus K$ is connected. The decisive hypothesis is
$\Hn^{n-1}(K)=0$, not merely zero $n$-dimensional measure.
The purple dots represent $K$ only schematically; no discreteness
assumption is made.
\begin{figure}[H]
\centering
\begin{tikzpicture}[x=1cm,y=1cm,font=\small]
\path[use as bounding box] (0,0) rectangle (15.65,3.65);
\filldraw[fill=gglight,draw=ggblue!55,line width=.55pt]
(.25,.5)--(8.45,.5)--(9.5,3.35)--(1.30,3.35)--cycle;
\foreach \x/\y in {1.4/1.0,2.4/.8,3.5/.85,4.6/.82,5.9/.9,7.3/1.0,8.2/1.4,1.65/2.9,2.7/3.05,3.7/2.9,4.8/3.1,6.1/2.95,7.2/3.1,8.2/2.8,2.45/2.18,3.3/2.6,4.1/2.2,5.0/2.42,5.8/2.3,6.9/2.6,7.8/1.9,3.6/1.38,6.55/1.35}
  \fill[ggpurple] (\x,\y) circle (1.4pt);
\coordinate (a) at (1.77,2.13);
\coordinate (q) at (4.8,1.43);
\coordinate (b) at (8.17,2.25);
\draw[ggline] (a)--(q)--(b);
\draw[ggarrow,ggblue] ($(a)!.36!(q)$)--($(a)!.55!(q)$);
\draw[ggarrow,ggblue] ($(q)!.42!(b)$)--($(q)!.61!(b)$);
\foreach \p in {a,q,b} {\fill[ggblue] (\p) circle (2pt);}
\node[below left,text=ggblue] at (a) {$a$};
\node[below,text=ggblue] at (q) {$q$};
\node[right,text=ggblue] at (b) {$b$};
\node[font=\large] at (.85,.81) {$\Pi$};
\node[anchor=west] at (10.05,2.38) {$K=\Pi\cap\sing S_E$};
\node[anchor=west] at (10.05,1.77) {$\Hn^{n-1}(K)=0$};
\node[anchor=west] at (10.05,1.10) {$[a,q]\cup[q,b]\subset\Pi\setminus K$};
\fill[ggpurple] (10.19,.42) circle (1.4pt);
\node[anchor=west,ggsmall] at (10.43,.42) {$K$};
\end{tikzpicture}
\space
\caption{Planar connectivity.}
\label{fig:thin-set}
\end{figure}

\section{Maximum sets under convergence}\label{sec-maximum}

In this section we prove the compactness statement at the heart of the
argument. A normalized subsolution whose upper bounds converge to one
cannot lose all of its maximum set at singular points. Applying the
mean-value estimate to each positive truncation produces a set of
positive measure on which the limit equals one.

\begin{proposition}[Maximum-set compactness]\label{maximum-compactness}
Suppose $n\geq 3$, and let $\Sigma_j=S_{E_j}$ be smooth global
$\Phi$-minimizing boundaries with $0\in\Sigma_j$. Let $L_j,Q_j$ be
as in \eqref{jacobi-operator}, and write $\mu_j=\Hn^n\restr\Sigma_j$.
Suppose $w_j\in W^{1,2}_{\mathrm{loc}}(\Sigma_j)\cap C^0(\Sigma_j)$ are
nonnegative and satisfy
\begin{equation}\label{normalized}
L_jw_j\geq Q_jw_j\quad\text{weakly on }\Sigma_j,\qquad
\max_{\Sigma_j\cap\overline B_1}w_j=1.
\end{equation}
Assume also that, for every fixed $R\geq 1$,
\begin{equation}\label{upper-bound}
\limsup_{j\to\infty}\sup_{\Sigma_j\cap\overline B_R}w_j\leq 1.
\end{equation}
Then a subsequence of $E_j$ converges locally to a halfspace, and the
corresponding boundaries converge smoothly on compact sets to its
boundary plane.
\end{proposition}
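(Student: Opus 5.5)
First I pass to a subsequence using the minimizing compactness \eqref{compactness}: $E_j\to E$ in $L^1_{\mathrm{loc}}$, with $0\in S:=S_E$ and $\mu_j\stackrel{*}{\rightharpoonup}\mu_E=\Hn^n\restr S$. Supports converge in local Hausdorff distance, and convergence is smooth and one-sheeted on compact subsets of $\reg S$. By \eqref{singular-size}, $\sing S$ is closed and $\Hn^n$-null. On each compact $K\subset\reg S$, write $\Sigma_j$ near $K$ as normal graphs over $S$. By \eqref{upper-bound} the $w_j$ are eventually bounded by $2$ on compact sets. Since $L_jw_j\geq Q_jw_j\geq0$, the $w_j$ are nonnegative subsolutions of uniformly elliptic divergence-form equations with smoothly converging coefficients. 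I would like a limit $w$ on $\reg S$, but subsolutions need not be equicontinuous, so I would not rely on uniform convergence. Instead I take a weak-$*$ $L^\infty$ limit, or the upper relaxed limit $w=\limsup^* w_j$, transported through the graph parametrizations. Then $0\leq w\leq 1$ on $\reg S$ by \eqref{upper-bound}. Weak limits of subsolutions are subsolutions, so $L_Sw\geq Q_Sw$ weakly on $\reg S$.

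The key step is to show that $w=1$ on a set of positive $\mu_E$-measure. Pick $X_j\in\Sigma_j\cap\overline B_1$ with $w_j(X_j)=1$, and pass to a subsequence with $X_j\to X\in S\cap\overline B_1$. For fixed $k\in(0,1)$, Lemma~\ref{mean-value} applied to $(w_j-k)_+$, a nonnegative subsolution because $L_jw_j\geq0$, on $B_r(X_j)$ with $r=1$ gives
\[
(1-k)^2\leq C_M^2\int_{\Sigma_j\cap B_2(X_j)}(w_j-k)_+^2\,d\mu_j .
\]
Because $w_j\leq1+o(1)$ on $B_4$, the integrand is at most $(1-k+o(1))^2\chi_{\{w_j>k\}}$. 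Hence $\mu_j(\{w_j>k\}\cap B_3)\geq c>0$ uniformly in $k$ and $j$. The main obstacle is to transfer this to the limit without losing mass at $\sing S$. I would handle it as follows. Cover $\sing S\cap\overline B_3$ by an open set $V$ with $\mu_E(\overline V)<c/4$, which is possible since $\mu_E(\sing S)=0$. Convergence of the unweighted measures then gives $\mu_j(V)<c/2$ for large $j$, so $\mu_j(\{w_j>k\}\cap B_3\setminus V)\geq c/2$. On the compact regular set $\overline B_3\cap S\setminus V$ the convergence is graphical and smooth. Thus $\mu_E$ of the set where the relaxed limit satisfies $w\geq k$ is at least $c/2$; some care is needed with the upper relaxed limit, for instance using Fatou on $\limsup\chi_{\{w_j>k\}}$. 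Letting $k\uparrow1$ along a sequence and intersecting the nested sets gives $\mu_E(\{w=1\}\cap B_3\cap\reg S)\geq c/2>0$. The relaxed limit is the upper semicontinuous envelope. To match it with a weak subsolution I would instead use the mean-value estimate again: the upper envelope of a weak limit of subsolutions is controlled by local averages. So $\{w\geq1\}$ has positive measure for the weak limit as well, since $\int(w_j-k)_+^2$ passes to the limit through the $L^2$ bounds on regular compacta.

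Since $\reg S$ has countably many components, fix a component $U$ on which $\{w=1\}$ has positive measure. On $U$, $w\leq1$ is a subsolution of $L_Sw\geq0$ attaining its supremum $1$ on a positive-measure set. The weak strong maximum principle, via the weak Harnack inequality for the supersolution $1-w\geq0$ in divergence form on the connected smooth $U$, forces $1-w\equiv0$ on $U$. Then $0\leq L_Sw$ becomes $0=L_S1\geq Q_S\cdot1$, so $Q_S\leq0$. Since $Q_S\geq\lambda|A_S|^2$, this gives $A\equiv0$ on $U$. Lemma~\ref{planar-component} now shows that $E$ is a halfspace. Its boundary is a plane with empty singular set, so the one-sheet smooth convergence on compact subsets of $\reg S$ is smooth convergence on all compact sets, which proves the claim.
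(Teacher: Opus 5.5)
\textbf{Gap: the passage to the limit on $\reg S$.} Your architecture matches the paper's: compactness, the mean-value estimate applied to $(w_j-k)_+$, excision of a small neighborhood of $\sing S$ using convergence of $\mu_j$, weak Harnack for $1-w$ on a regular component, $Q_S\geq\lambda|A_S|^2$, and Lemma~\ref{planar-component}. Your excision by outer regularity of $\mu_E$ at $\sing S$, together with $\limsup_j\mu_j(\overline V)\leq\mu_E(\overline V)$, is a fine substitute for the paper's ball cover with the density bound. The problem is what you do on the regular part.

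You deliberately take a weak-$*$ $L^\infty$ limit (or a pointwise relaxed limit). You then claim that $\int(w_j-k)_+^2$ ``passes to the limit through the $L^2$ bounds on regular compacta.'' It does not. The map $t\mapsto(t-k)_+^2$ is convex, so weak convergence only gives $\int(w-k)_+^2\,d\mu\leq\liminf_j\int(w_j-k)_+^2\,d\mu_j$. That is the wrong direction for transferring a lower bound: for general bounded sequences, each $w_j$ can equal $1$ on half of a chart while the weak limit is $1/2$ everywhere. Reverse Fatou does give $\mu_E(\{\limsup_jw_j\geq k\})\geq c/2$ for the pointwise upper limit. But that function is not the one you know to be a subsolution. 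The mean-value estimate, which has a constant $C_M>1$ rather than a sharp mean-value identity, does not identify it with the weak limit. Separately, the weak Harnack inequality needs $1-w\in W^{1,2}_{\mathrm{loc}}$, and a weak-$*$ $L^\infty$ limit does not supply this.

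\textbf{The fix: an energy bound.} Test $L_jw_j\geq Q_jw_j$ with $\eta^2w_j$, where $\eta$ is a cutoff in a regular graphical chart, to get
\[
\int_{\Sigma_j}\eta^2|\nabla w_j|_{P_j}^2\,d\mu_j+\int_{\Sigma_j}\eta^2Q_jw_j^2\,d\mu_j\leq4\int_{\Sigma_j}w_j^2|\nabla\eta|_{P_j}^2\,d\mu_j .
\]
Together with \eqref{upper-bound}, this bounds the pullbacks in $W^{1,2}$ on each regular chart. Rellich compactness and a diagonal argument then give strong $L^2_{\mathrm{loc}}$ convergence to some $w\in W^{1,2}_{\mathrm{loc}}(\reg S)$, and a.e.\ convergence after a further subsequence. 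With this your argument closes:
\begin{itemize}
\item the weak inequality $L_Sw\geq Q_Sw$ passes to the limit;
\item a.e.\ convergence and reverse Fatou on the regular region $\overline B_3\cap S\setminus V$ give $\mu_E(\{w\geq k\})\geq c/2$ for each $k<1$, hence $\mu_E(\{w=1\})\geq c/2$;
\item the weak Harnack step becomes legitimate.
\end{itemize}
You are right that subsolutions need not be equicontinuous. The correct replacement for uniform convergence is strong $L^2$ compactness from this Caccioppoli estimate, not weak-$*$ compactness.
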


\begin{proof}
Use \eqref{compactness} to obtain a global minimizing limit $E$, and
write $S=S_E$ and $\mu=\mu_E$. On $\reg S$, let $L_S,Q_S,A_S$ be
the operator, potential, and second fundamental form defined by the
conventions following \eqref{jacobi-operator}. In particular, $0\in S$.
On a relatively compact regular chart of $S$, the boundaries $\Sigma_j$
are single smooth graphs for large $j$. Choose a nonnegative smooth
cutoff $\eta$ compactly supported in this chart and pull it back to the
approximating graphs, retaining the same symbol. Testing
\eqref{normalized} with $\eta^2w_j$ and using Cauchy--Schwarz gives
\begin{equation}\label{caccioppoli}
\int_{\Sigma_j}\eta^2|\nabla w_j|_{P_j}^2\,d\mu_j
+\int_{\Sigma_j}\eta^2Q_jw_j^2\,d\mu_j
\leq 4\int_{\Sigma_j}w_j^2|\nabla\eta|_{P_j}^2\,d\mu_j.
\end{equation}
Indeed, if the first two terms are denoted by $I$ and $J$, the test
gives
$I+J\leq 2I^{1/2}(\int w_j^2|\nabla\eta|_{P_j}^2)^{1/2}$;
absorption proves the displayed inequality.

The local bounds from \eqref{upper-bound} and \eqref{caccioppoli} give
uniform $W^{1,2}$ bounds after identification with a fixed regular
chart. By Rellich compactness and a diagonal extraction, there is
$w\in W^{1,2}_{\mathrm{loc}}(\reg S)$ to which the pullbacks of $w_j$
converge strongly in local $L^2$ and weakly in local $W^{1,2}$. Smooth
graphical convergence also gives local uniform convergence of the
coefficient matrices and curvature potentials. Passing to the weak
inequality on these charts yields
\begin{equation}\label{limit-inequality}
0\leq w\leq 1,\qquad L_Sw\geq Q_Sw\geq 0\quad\text{on }\reg S.
\end{equation}

We next justify convergence of the integrals needed for the mean-value
estimate, including neighborhoods of the singular set. Choose $\rho>1$
with $\mu(\partial B_\rho)=0$, and fix $k\in(0,1)$. We claim that
\begin{equation}\label{integral-convergence}
\int_{\Sigma_j\cap B_\rho}(w_j-k)_+^2\,d\mu_j
\longrightarrow
\int_{S\cap B_\rho}(w-k)_+^2\,d\mu.
\end{equation}
The function $w$ can be assigned any value on $\sing S$ in the
right-hand integral, since this set is $\mu$-null.

Here are the details of the claim. In a slightly larger closed ball,
the singular set is compact and has zero $\Hn^n$ measure. For any
$\varepsilon>0$ it can be covered by finitely many ambient balls
$B_{r_i}(Y_i)$ with $\sum_i r_i^n<\varepsilon$. The upper bound in
\eqref{density} gives
\[
\mu_j\left(\bigcup_i B_{r_i}(Y_i)\right)
+\mu\left(\bigcup_i B_{r_i}(Y_i)\right)\leq C\varepsilon.
\]
If a center is not on $\Sigma_j$, one chooses a point of $\Sigma_j$ in
the ball and doubles its radius; empty intersections contribute nothing.
The integrands in \eqref{integral-convergence} are uniformly bounded on
this fixed region, so the contribution of the cover is $O(\varepsilon)$.
The part of $S$ in the closed ball and outside the cover is compact and
regular. Finitely many graphical charts, with a partition of unity,
give convergence of the integrals there from strong local $L^2$
convergence. Support convergence excludes any remaining portion of
$\Sigma_j$ away from these charts and the cover. Finally, by
\eqref{compactness} and $\mu(\partial B_\rho)=0$, the upper limit as
$j\to\infty$ of the mass in a thin annulus about $\partial B_\rho$
tends to zero with its width. Sending first $j\to\infty$ and then
$\varepsilon\to 0$ proves \eqref{integral-convergence}.

Fix one such $\rho\in(4,5)$. The positive truncation $(w_j-k)_+$ is
a nonnegative subsolution of $L_j$. The normalization in
\eqref{normalized} and the mean-value estimate on $B_2\subset B_4$ give
\begin{equation}\label{truncated-mean-value}
1-k\leq C\left(\int_{\Sigma_j\cap B_\rho}(w_j-k)_+^2\,d\mu_j\right)^{1/2},
\end{equation}
where $C=C(n,\Phi)$ is independent of $j$ and $k$. At each fixed $k$,
pass to the limit using \eqref{integral-convergence}. Since $w\leq 1$,
\[
1-k\leq C\left(\int_{S\cap B_\rho}(w-k)_+^2\,d\mu\right)^{1/2}
\leq C(1-k)\,\mu\bigl(S\cap B_\rho\cap\{w>k\}\bigr)^{1/2}.
\]
Cancel $1-k$ and then let $k\uparrow 1$. Continuity from above for the
finite measure $\mu\restr B_\rho$ gives
\begin{equation}\label{positive-maximum-set}
\mu\bigl(S\cap B_\rho\cap\{w=1\}\bigr)\geq C^{-2}>0.
\end{equation}
Thus a regular chart meets the maximum set in positive measure.

On that chart, $h=1-w$ is a nonnegative weak supersolution of
$L_Sh\leq 0$. In local coordinates $x$, choose a Lebesgue density point
$y$ of $\{h=0\}$ and a radius $r>0$ so small that
$\overline B_{2r}^n(y)$ lies in the chart and
$|\{h=0\}\cap B_r^n(y)|>0$. Coordinate volume and the induced measure
are locally equivalent. In particular,
$\essinf_{B_r^n(y)}h=0$. The local weak Harnack inequality
\cite[Theorem 8.18]{GT} gives, for some $p>0$,
\[
\left(\frac{1}{|B_r^n(y)|}\int_{B_r^n(y)}h^p\,dx\right)^{1/p}
\leq C\,\essinf_{B_r^n(y)}h=0.
\]
Hence $h=0$ almost everywhere on this ball. A finite chain of overlapping
coordinate balls along any path in the same regular component propagates
this equality. Thus $w=1$ almost everywhere on a connected component
$U$ of $\reg S$. No uniform Harnack constant across the singular set
is needed here.

From \eqref{limit-inequality}, $0\geq Q_S$ on $U$; hence $Q_S=0$ and
$A_S=0$ there. Lemma~\ref{planar-component} implies that $E$ is a
halfspace. The regular convergence in Section~\ref{sec-estimates} now
applies on every compact subset of its boundary plane, proving the last
assertion.
\end{proof}

\section{The growth gap}\label{sec-growth}

In this section we turn the compactness statement into a comparison
between two fixed scales. Iteration gives a power-growth lower bound at
every sufficiently large radius for a nonaffine graph. The scale and the
exponent depend only on the fixed integrand, although the compactness
argument does not provide numerical values for them.

For an entire graph $\Sigma$, $O\in\Sigma$, and $r>0$, define
\begin{equation}\label{V-definition}
V_O(r)=\max_{\Sigma\cap\overline B_r(O)}v,
\qquad v=(\nu\cdot e_{n+1})^{-1}.
\end{equation}
The graph is proper, so the maximum is finite and attained. The function
$V_O$ is nondecreasing.

\begin{lemma}[Jacobi comparison]\label{jacobi-comparison}
Suppose $\Sigma_j$ are upward entire $\Phi$-minimal graphs through zero,
with upward unit normals $\nu_j$ and
$v_j=(\nu_j\cdot e_{n+1})^{-1}$, and their subgraphs converge locally
to a halfspace. If the boundary
convergence is smooth on compact subsets of the limiting plane, then
for all sufficiently large $j$,
\begin{equation}\label{harnack-comparison}
\max_{\Sigma_j\cap\overline B_1}v_j\leq Cv_j(0),
\end{equation}
where $C$ depends only on $n,\Phi$ and not on the angle of the limiting
plane with the vertical direction.
\end{lemma}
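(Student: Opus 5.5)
The plan is to compare the vertical Jacobi field with the translation Jacobi field in the direction normal to the limiting plane, as announced in the introduction. Let $\Pi$ be the boundary plane of the limiting halfspace and $N$ its unit normal, oriented compatibly with the outer normals of the subgraphs. We have $0\in\Pi$, since $0\in\Sigma_j$ and the supports converge in Hausdorff distance. By hypothesis the convergence is smooth on $\Pi\cap\overline B_3$. Together with support convergence, this shows that for large $j$ the set $\Sigma_j\cap B_{5/2}$ lies in a single connected normal graph over a region of $\Pi$ containing the disk $\Pi\cap B_{5/2}$. This graph has $C^2$ norm tending to zero, and $\Sigma_j\cap\overline B_1$ is contained in the part lying over $\Pi\cap B_{3/2}$.

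On this graphical piece put $g_j=\nu_j\cdot N$ and $f_j=\nu_j\cdot e_{n+1}=v_j^{-1}>0$. By \eqref{translation-jacobi} both are Jacobi fields: $L_jf_j+Q_jf_j=0$ and $L_jg_j+Q_jg_j=0$. Smooth convergence of the normals gives $g_j\to1$ uniformly, so $g_j\geq1/2$ for large $j$. A direct computation shows that the quotient $h_j=f_j/g_j$ satisfies the divergence-form equation
\[
\Div_{\Sigma_j}\bigl(g_j^2P_j\nabla h_j\bigr)=0.
\]
The potential cancels because $g_j$ is itself a Jacobi field. We have $h_j>0$, and by \eqref{ellipticity} the coefficient matrix $g_j^2P_j$ has eigenvalues in $[\lambda/4,\Lambda]$. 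No information about $e_{n+1}$ enters here: $f_j$ appears only as a positive solution. This is what makes the constant independent of the angle between $\Pi$ and the vertical. It does not matter that $f_j$ may tend to zero when $\Pi$ is vertical.

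Write the equation in the coordinates of $\Pi$ on the disk $\Pi\cap B_{5/2}$. Since the graph functions are $C^1$-small, the induced metric is uniformly close to Euclidean. The equation therefore becomes a uniformly elliptic divergence-form equation on a Euclidean disk, with bounded measurable coefficients and ellipticity constants depending only on $n,\lambda,\Lambda$. The Harnack inequality \cite[Theorem 8.20]{GT}, applied on $\Pi\cap B_{3/2}$ inside $\Pi\cap B_{5/2}$, gives
\[
\sup h_j\leq C\inf h_j
\]
over the part of $\Sigma_j$ above $\Pi\cap B_{3/2}$, which contains $\Sigma_j\cap\overline B_1$ and the point $0$. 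Since $1/2\leq g_j\leq1$, this yields $f_j\geq c\,f_j(0)$ on $\Sigma_j\cap\overline B_1$. Taking reciprocals gives \eqref{harnack-comparison} with $C$ depending only on $n$ and $\Phi$.

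The main obstacle I expect is the geometric bookkeeping in the first step. One must show that all of $\Sigma_j\cap\overline B_1$ lies in one connected graphical sheet over a fixed-size disk of $\Pi$, so that a single Harnack chain applies with uniform constants. For this I would combine Hausdorff convergence of the supports, which excludes extra pieces of $\Sigma_j$ away from $\Pi$, with the one-sheet smooth convergence on compact regular sets from Section~\ref{sec-estimates}, which excludes additional sheets near $\Pi$. The verification of the quotient identity is routine.
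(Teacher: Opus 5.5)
Your proposal is correct and follows essentially the same route as the paper. The quotient of $\nu_j\cdot e_{n+1}$ by the Jacobi field $\nu_j\cdot N$ solves a divergence-form equation whose ellipticity is controlled by \eqref{ellipticity} and the bound $\nu_j\cdot N\geq 1/2$, and interior Harnack on a single small-slope sheet over $N^\perp$ then gives \eqref{harnack-comparison} independently of the angle. Your extra bookkeeping, which combines support convergence with one-sheet graphical convergence, simply spells out what the paper takes from the regular-convergence discussion in the estimates section.
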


\begin{proof}
Let $N$ be the outer unit normal of the limiting halfspace. Since the
limiting plane passes through zero, it is $N^\perp$. On a fixed large compact portion
of this plane, smooth one-sheet convergence gives a
single small-slope graph containing $\Sigma_j\cap\overline B_1$ and
zero, with a fixed interior margin. Define
\[
h_j=\nu_j\cdot N,\qquad f_j=\nu_j\cdot e_{n+1}.
\]
For large $j$, $1/2\leq h_j\leq 1$, whereas $f_j>0$ since $\Sigma_j$
is an upward graph. Both functions solve the translation Jacobi equation
\eqref{translation-jacobi}. Subtracting the two equations after
multiplication by the other field gives
\begin{equation}\label{quotient-equation}
\Div_{\Sigma_j}\bigl(h_j^2P_j\nabla_{\Sigma_j}(f_j/h_j)\bigr)=0.
\end{equation}
In coordinates on $N^\perp$ this is a divergence-form equation with a
uniformly elliptic coefficient matrix. Its ellipticity constants are
controlled by \eqref{ellipticity}, $h_j\geq 1/2$, and the small slope
of the chart. Interior Harnack for the positive quotient gives
\[
\frac{f_j(X)}{h_j(X)}\geq c\,\frac{f_j(0)}{h_j(0)}
\qquad(X\in\Sigma_j\cap\overline B_1).
\]
Since $h_j$ is bounded above and below, this proves
\eqref{harnack-comparison}. No lower bound for $N\cdot e_{n+1}$ has
been used.
\end{proof}

\Needspace{10.0cm}
Figure~\ref{fig:jacobi} illustrates Lemma~\ref{jacobi-comparison}.
The same schematic local curve is shown in ambient coordinates in
panel (a) and in coordinates adapted to $N^\perp$ in panel (b).
Since $\nu_j$ stays close to $N$, we have
$h_j=\nu_j\cdot N\geq1/2$ for large $j$, even when the vertical
component $f_j=\nu_j\cdot e_{n+1}$ is small. Harnack applied to the
positive quotient $f_j/h_j$ yields \eqref{harnack-comparison} for
$v_j=1/f_j$. The comparison controls relative, not absolute, steepness;
no positive lower bound on $N\cdot e_{n+1}$ is used.
\begin{figure}[H]
\centering
\begin{tikzpicture}[x=1cm,y=1cm,font=\small]
\path[use as bounding box] (0,0) rectangle (15.65,5.1);
\node[font=\small\bfseries,text=ggblue] at (3.72,4.83) {(a)};
\node[font=\small\bfseries,text=ggblue] at (11.95,4.83) {(b)};
\draw[black!13] (7.75,.5)--(7.75,4.4);
\coordinate (Xl) at (3.9,2.42);
\draw[gggray,dashed,line width=.65pt]
({3.9-.2*1.95/sqrt(1.04)},{2.42-1.95/sqrt(1.04)})--
({3.9+.2*1.95/sqrt(1.04)},{2.42+1.95/sqrt(1.04)});
\draw[ggline] plot[domain=-1.95:1.95,samples=160]
({3.9+(.2*\x-.12*sin(deg(.8*\x)))/sqrt(1.04)},
 {2.42+(\x+.024*sin(deg(.8*\x)))/sqrt(1.04)});
\fill[ggblue] (Xl) circle (1.6pt);
\node[right] at (4.03,2.14) {$X$};
\draw[ggarrow,ggred] (Xl)--++(0,1.81) node[above left,text=ggred] {$e_{n+1}$};
\draw[ggarrow,gggray,dashed] (Xl)--++(-1.77,.354) node[above left,text=gggray] {$N$};
\draw[ggarrow,ggblue] (Xl)--++(-1.80,.184) node[below left,text=ggblue] {$\nu_j$};
\node[anchor=west] at (4.50,4.19) {$N^\perp$};
\node[anchor=west,text=ggblue] at (4.23,.73) {$\Sigma_j$};
\draw[ggaxis] (.40,.55)--(1.45,.55) node[right] {$x_1$};
\draw[ggaxis] (.40,.55)--(.40,1.62) node[above] {$x_{n+1}$};
\node[anchor=west] at (4.68,2.75) {$f_j=\nu_j\cdot e_{n+1}$};
\coordinate (Xr) at (11.93,2.00);
\draw[gggray,dashed,line width=.65pt] (8.65,2)--(15.18,2);
\draw[ggline] plot[domain=-2.65:2.65,samples=160]
({11.93+\x},{2+.12*sin(deg(.8*\x))});
\fill[ggblue] (Xr) circle (1.6pt);
\draw[ggarrow,gggray,dashed] (Xr)--++(0,1.81) node[above right,text=gggray] {$N$};
\draw[ggarrow,ggblue] (Xr)--++(-.173,1.802) node[above left,text=ggblue] {$\nu_j$};
\node[below right] at (Xr) {$X$};
\node[below,ggsmall] at (14.60,1.86) {$N^\perp$};
\node[above,text=ggblue] at (14.58,2.14) {$\Sigma_j$};
\node[anchor=west] at (8.48,1.03) {$h_j=\nu_j\cdot N\geq\tfrac12$};
\node[anchor=west] at (8.48,.43) {$\displaystyle\max_{\Sigma_j\cap\overline B_1}v_j\leq C v_j(0)$};
\end{tikzpicture}
\space
\caption{Jacobi comparison.}
\label{fig:jacobi}
\end{figure}

\begin{proposition}[Fixed-scale growth]\label{fixed-scale}
Suppose $n\geq 3$. There are constants $K>1$ and $L>2$, depending
only on $n,\Phi$, such that every entire $\Phi$-minimal graph satisfies
\begin{equation}\label{doubling-comparison}
V_O(r)\leq\max\left\{Kv(O),\frac12 V_O(Lr)\right\}
\qquad(O\in\Sigma,\ r>0).
\end{equation}
\end{proposition}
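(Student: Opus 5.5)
We argue by contradiction and compactness. Suppose no constants $K,L$ work. Then for every $j$, taking $K_j=L_j=j$ (any $K_j\to\infty$, $L_j\to\infty$ suffices), we get an entire $\Phi$-minimal graph $\Sigma_j$, a point $O_j\in\Sigma_j$ and a radius $r_j>0$ with
\[
V_{O_j}(r_j)>K_jv_j(O_j),\qquad V_{O_j}(r_j)>\tfrac12 V_{O_j}(L_jr_j).
\]
Translate $O_j$ to zero and dilate by $r_j^{-1}$. Both operations preserve $\Phi$-minimality and the function $v$, so we may assume $O_j=0$ and $r_j=1$. Normalize $w_j=v_j/V_0^{(j)}(1)$, where $V_0^{(j)}(1)$ is the maximum of $v_j$ over $\Sigma_j\cap\overline B_1$. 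Then $\max_{\Sigma_j\cap\overline B_1}w_j=1$ and $w_j(0)<1/K_j\to0$. Since $V$ is nondecreasing, for each fixed $R\geq1$ and all $j$ with $L_j\geq R$ we get $\sup_{\Sigma_j\cap\overline B_R}w_j\leq V(L_j)/V(1)<2$.

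This only gives the upper bound $2$, whereas \eqref{upper-bound} requires a limsup of at most $1$. That is the main obstacle, and I would handle it by re-selecting the scale. Along the dyadic-type sequence of radii $L^{\ell}$ inside $[1,L_j]$, the ratio $V(L_j)/V(1)<2$ forces the increments $V(\rho s)/V(s)$ to be close to $1$ for most $s$. Concretely, take $M_j\to\infty$ with $M_j^{m_j}\leq L_j$ and $m_j\to\infty$, where $m_j$ is the number of scale steps. Since $\prod_{i<m_j}V(M_j^{i+1})/V(M_j^i)<2$, some index $i$ has ratio at most $2^{1/m_j}\to1$. I would also keep the small-center condition at the new scale $s=M_j^i$ by arranging $K_j$ so that $K_j2^{-1}\cdot$(loss) still tends to infinity: indeed $V(s)\geq V(1)>K_jv(0)$, so $v(0)/V(s)\to0$ automatically. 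Rescaling by $s$ then gives $w_j(0)\to0$, maximum exactly $1$ on $\overline B_1$, and $\sup_{\overline B_{M_j}}w_j\leq2^{1/m_j}\to1$. This yields \eqref{upper-bound} for every fixed $R$. The point $0$ stays on $\Sigma_j$ and $w_j=v_j/\mathrm{const}$ satisfies $L_jw_j\geq Q_jw_j$ by \eqref{reciprocal-jacobi}, so the hypotheses \eqref{normalized} hold. The graphs are smooth global minimizing boundaries by the calibration remark after Lemma~\ref{current-minimality}.

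Proposition~\ref{maximum-compactness} (which needs $n\geq3$) now gives a subsequence whose subgraphs converge to a halfspace, with smooth convergence of the boundaries on compact subsets of the plane. Lemma~\ref{jacobi-comparison} then yields, for large $j$, $\max_{\Sigma_j\cap\overline B_1}v_j\leq Cv_j(0)$ with $C=C(n,\Phi)$. Dividing by the normalizing constant gives $1\leq Cw_j(0)\to0$, a contradiction. Hence there exist $K,L$ depending only on $n,\Phi$, and we may increase $L$ to ensure $L>2$.
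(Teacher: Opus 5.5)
Your proof is correct. Its analytic core matches the paper's: rescale a failing configuration, apply Proposition~\ref{maximum-compactness} to the normalized reciprocal field, and contradict the small central value with Lemma~\ref{jacobi-comparison}. The two arguments differ only in how they obtain the upper bound \eqref{upper-bound}, for which you correctly saw that the naive bound $2$ is not enough.

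The paper weakens the target first. By contradiction it proves $V_O(r)\le\max\{C_0v(O),\theta V_O(Ar)\}$ for some $\theta\in(0,1)$. A failing sequence with $\theta_j\uparrow1$ then has $w_j<\theta_j^{-1}$ on $\overline B_{A_j}$, so it meets \eqref{upper-bound} directly. The factor $\frac12$ is recovered afterwards by iterating $m$ times with $\theta^m\le\frac12$ and setting $L=A^m$.

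You keep the factor $\frac12$ in the contradiction hypothesis and run the iteration inside it, in contrapositive form. The ratio $V(M_j^{m_j})/V(1)<2$ telescopes into $m_j$ ratios, each at least one. Hence some scale $s=M_j^i$ has $V(M_js)/V(s)<2^{1/m_j}$. Monotonicity gives $v(0)/V(s)\le v(0)/V(1)<K_j^{-1}$ for free, so your remark about arranging $K_j$ to absorb a loss is unnecessary: there is no loss. This is the paper's iteration read backwards, with the fixed step count $m$ replaced by $m_j\to\infty$ and the fixed ratio $A$ by $M_j\to\infty$. That is possible because $L_j$ is arbitrary; for instance, take $m_j\approx(\log L_j)^{1/2}$ and $M_j=e^{(\log L_j)^{1/2}}$, so that $M_j^{m_j}\le L_j$.

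What each route buys: yours avoids an intermediate comparison statement, at the cost of the pigeonhole scale selection and the bookkeeping above. The paper's makes the contradiction sequence match the hypotheses of Proposition~\ref{maximum-compactness} verbatim. It also shows that any doubling factor $\delta\in(0,1)$ follows from the single triple $(A,C_0,\theta)$ by taking more iterations. Neither route gives effective constants.
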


\begin{proof}
We first prove that there are $A>1$, $C_0>1$, and $\theta\in(0,1)$,
depending only on $n,\Phi$, for which
\begin{equation}\label{initial-comparison}
V_O(r)\leq\max\{C_0v(O),\theta V_O(Ar)\}
\end{equation}
holds for all graphs, centers, and radii. If no such triple exists, take
sequences $A_j,C_j>1$ and $\theta_j\in(0,1)$ such that
$A_j\to\infty$, $C_j\to\infty$, and $\theta_j\uparrow 1$. For each
$j$ there are a graph $\widehat\Sigma_j$, a point
$O_j\in\widehat\Sigma_j$, and $r_j>0$ for which the corresponding
comparison fails. Let $\widehat v_j$ be the reciprocal vertical Jacobi
field on $\widehat\Sigma_j$ and, for $s>0$, set
\[
\widehat V_{O_j}(s)=\max_{\widehat\Sigma_j\cap\overline B_s(O_j)}
\widehat v_j.
\]
Translate by $-O_j$, dilate by $r_j^{-1}$, and normalize the field:
\[
\Sigma_j=r_j^{-1}(\widehat\Sigma_j-O_j),\qquad
w_j(X)=\frac{\widehat v_j(O_j+r_jX)}{\widehat V_{O_j}(r_j)}
\quad(X\in\Sigma_j).
\]
The resulting function satisfies
\[
L_jw_j\geq Q_jw_j,\qquad
\max_{\Sigma_j\cap\overline B_1}w_j=1,\qquad
w_j(0)<C_j^{-1},
\]
and
\[
0<w_j<\theta_j^{-1}\qquad\text{on }\Sigma_j\cap\overline B_{A_j}.
\]
Thus Proposition~\ref{maximum-compactness} applies. After a subsequence,
the subgraphs converge to a halfspace and the graphs converge smoothly
to its plane. By Lemma~\ref{jacobi-comparison},
\[
1=\max_{\Sigma_j\cap\overline B_1}w_j
\leq Cw_j(0)<C/C_j,
\]
a contradiction.

Choose an integer $m\geq 1$ such that $\theta^m\leq 1/2$. Iteration
of \eqref{initial-comparison} gives
\[
V_O(r)\leq\max\{C_0v(O),\theta^mV_O(A^mr)\}.
\]
Take $K=C_0$ and enlarge $A^m$, if necessary, to a number $L>2$.
Monotonicity of $V_O$ proves \eqref{doubling-comparison}.
\end{proof}

Put
\begin{equation}\label{beta}
\beta=\frac{\log 2}{\log L}\in(0,1).
\end{equation}
The fixed-scale comparison has two useful consequences. First, for
$R\geq r>0$, let $\ell=\lfloor\log_L(R/r)\rfloor$. Iteration up to
$\ell$ gives
\begin{equation}\label{interpolation}
V_O(r)\leq\max\left\{Kv(O),\,2\left(\frac rR\right)^\beta V_O(R)\right\}.
\end{equation}
Indeed, $L^\ell r\leq R$ and $2^{-\ell}\leq 2(r/R)^\beta$.
Second, once $V_O(r_0)>Kv(O)$, monotonicity keeps this inequality valid
at all larger radii, and \eqref{doubling-comparison} forces doubling at
every step. Hence
\begin{equation}\label{all-radii-growth}
V_O(R)\geq\frac12 V_O(r_0)\left(\frac R{r_0}\right)^\beta
\qquad(R\geq r_0).
\end{equation}
These are quantitative comparisons in terms of $K,L$. Their existence,
rather than effective bounds for their values, is what
Proposition~\ref{fixed-scale} establishes.

\begin{proof}[Proof of Theorem~\ref{main}]
For $n=2$ the assertion follows from Jenkins' theorem \cite{Jenkins};
one may take, for example, $\alpha=1/2$. Assume $n\geq 3$ and choose
$\alpha=\beta$ from \eqref{beta}. Let $O=(0,u(0))$. Projection of an
ambient ball to the base gives
\begin{equation}\label{projection-bound}
V_O(R)\leq\left(1+\left(\sup_{B_R^n}|Du|\right)^2\right)^{1/2}.
\end{equation}
Here the supremum over the open base ball equals that over its closure,
by continuity of $Du$. Under \eqref{small-growth}, $V_O(R)=o(R^\beta)$.
Fix $r$ and let $R\to\infty$ in \eqref{interpolation}. It follows that
$V_O(r)\leq Kv(O)$ for every $r>0$, so $Du$ is bounded on $\R^n$.

For completeness, write $D^2\varphi=(\varphi_{ij})$ and differentiate
\eqref{graph-equation}. Each $u_k=\partial_k u$ solves
\[
\partial_i\bigl(\varphi_{ij}(Du)\,\partial_j u_k\bigr)=0
\qquad\text{in }\R^n.
\]
The coefficient matrix is uniformly elliptic because $Du$ is bounded
and $D^2\varphi$ is positive definite on compact sets. The interior
oscillation estimate for bounded solutions, applied on $B_R^n$ and then
with $R\to\infty$, makes every $u_k$ constant \cite[Chapter 8]{GT}.
Thus $u$ is affine.

If instead $u$ is nonaffine, this same Liouville argument shows that
$V_O$ is unbounded. Choose $r_0$ so that $V_O(r_0)>Kv(O)$.
Combining \eqref{all-radii-growth} and \eqref{projection-bound}, and
increasing $r_0$ if needed, gives constants $c>0$ and $R_0<\infty$,
depending on the solution, such that
\[
\sup_{B_R^n}|Du|\geq cR^\beta\qquad(R\geq R_0).
\]
This proves the additional growth assertion.
\end{proof}

\begin{remark}
The dependence of the exponent in \eqref{beta} is implicit. To obtain
an explicit lower bound for $\beta$ from specified bounds on $\Phi$,
one would need an effective version of the compactness step leading to
\eqref{initial-comparison}. In particular, the present argument supplies
no quantitative rate at which almost-maximal subsolutions force
closeness to a plane. Nor does it identify the optimal growth exponent
or its behavior as $\Phi$ approaches the area integrand.
\end{remark}

\end{document}